\tikzstyle{vertex}=[circle, draw, inner sep=0pt, minimum size=6pt]
\newcommand{\vertex}{\node[vertex]}
\newtheorem{thm}{Theorem}[section]
\newtheorem{lem}[thm]{Lemma}
\newtheorem{prop}[thm]{Proposition}
\theoremstyle{definition}
\numberwithin{equation}{section}
\def\Gal{\text{Gal}}
\def\Im{\text{Im}}
\def\Gon{\text{Gon}}
\newcommand{\hooklongrightarrow}{\lhook\joinrel\longrightarrow}
\begin{document}


\baselineskip=17pt



\title[Kenku's method]{A note on the Kenku's method.}

\author[J. Wang]{Jian Wang}

\address{College of Mathematics\\ Jilin Normal University\\
Siping, Jilin 136000, China}
\email{blandye@gmail.com}

\date{\today}

\begin{abstract}
In this note, we present a complete and corrected version of Kenku's method in the proof of the nonexistence of 27-torsion on elliptic curves over quadratic fields and show why this method can not be applied to the cases of 32- and 24-torsion on elliptic curves over cubic fields.
\end{abstract}

\subjclass[2010]{11G05,11G18}

\keywords{torsion subgroup, elliptic curves, modular curves}

\maketitle


In 1983, Kenku proved the nonexistence of 27-torsion on elliptic curves over quadratic fields \cite{Kenku83}. He considered the reduction modulo 5 of the corresponding modular curve $X_1(27)$ and reached the conclusion that a noncuspidal quadratic point on $X_1(27)$ will lead to contradiction.   Kenku's method is correct in practice but vaguely stated with errors and gaps. In this note, we present a complete and corrected version of Kenku's method and show why this method can not be applied to the cases of 32- and 24-torsion on elliptic curves over cubic fields.

\section{Preliminaries}

Let $\mathbb{H}=\{z\in\mathbb{C}~|~\Im z>0\}$ be the upper half plane. Let $\overline{\mathbb{H}}=\mathbb{H}\cup\mathbb{P}^1(\mathbb{Q})$ be the extended upper half plane by adjoining cusps $\mathbb{P}^1(\mathbb{Q})=\mathbb{Q}\cup\{\infty\}$ to $\mathbb{H}$. Let $N$ be a positive integer. Let $X_1(N)$ (resp. $X_0(N)$) be the modular curve defined over $\mathbb{Q}$ associated to the following congruence subgroup $\Gamma_1(N)$ (resp. $\Gamma_0(N)$) of the full modular group $\Gamma=SL_2(\mathbb{Z})/\pm1$.
$$\aligned\Gamma_0(N)&=\left\{\left(
                          \begin{array}{cc}
                            a & b \\
                            c & d \\
                          \end{array}
                        \right)\in SL_2(\mathbb{Z})/\pm1~~|~~c\equiv0\mod N
\right\}\\
\Gamma_1(N)&=\left\{\left(
                          \begin{array}{cc}
                            a & b \\
                            c & d \\
                          \end{array}
                        \right)\in \Gamma_0(N)~~|~~a\equiv d\equiv1\mod N
\right\}
\endaligned$$
The we have the Galois covering
$$\pi: X_1(N)\longrightarrow X_0(N)$$
with group $G=\Gamma_0(N)/\Gamma_1(N)=\left\{\left(
                                          \begin{array}{cc}
                                            a & 0 \\
                                            0 & a^{-1} \\
                                          \end{array}
                                        \right):a\in(\mathbb{Z}/N\mathbb{Z})^\times/\pm1\right\}$.
Let $\Delta$ be a subgroup of $(\mathbb{Z}/N\mathbb{Z})^\times$ which contains $\pm1$. Let $X_\Delta(N)$ be the modular curve defined over $\mathbb{Q}$ associated the congruence subgroup
$$\Gamma_\Delta(N)=\left\{\left(
                          \begin{array}{cc}
                            a & b \\
                            c & d \\
                          \end{array}
                        \right)\in \Gamma_0(N)~~|~~a\in\Delta\mod N
\right\}$$
Then all the intermediate modular curves between $X_1(N)$ and $X_0(N)$ are of the form $X_\Delta(N)$.

We denote by $Y_1(N)=X_1(N)\backslash\{cusps\},
Y_0(N)=X_0(N)\backslash\{cusps\},
Y_\Delta(N)=X_\Delta(N)\backslash\{cusps\}$
the corresponding affine curves. Denote by $J_1(N)$ (respectively, $J_0(N)$, $J_\Delta(N)$) the jacobian of $X_1(N)$ (respectively, $X_0(N)$, $X_\Delta(N)$).

The representation and behavior under the covering $\pi: X_1(N)\longrightarrow X_\Delta(N)\longrightarrow X_0(N)$ of the cusps was explained by Ogg \cite{Ogg73}. The cusps can be regarded as pairs $\pm$$x\choose y$, where $x,y$ are in $\mathbb{Z}/N\mathbb{Z}$, and relatively prime, and $x\choose y$, $-x\choose-y$ are identified. The cusps are rational in $\mathbb{Q}(\zeta_N)=\mathbb{Q}(e^{2\pi i/N})$ and cusps with the same $y$ are conjugate. A cusp on $X_1(N)$ is an orbit $\{\pm$$x+by\choose y$$\}$, $b\in\mathbb{Z}/N\mathbb{Z}$. If $N\geq5$, then for each $d|N$, there are $\frac12\varphi(d)\varphi(N/d)$ cusps of $X_1(N)$ corresponding to the $\varphi(d)$ $x$'s with $(x,d)=1$ and the $\varphi(N/d)$ $y$'s with $d=(y,N)$. The cusps of $X_\Delta(N)$ (and $X_0(N)$) are the orbits of $G$ on the cusps of $X_1(N)$ with the action
$$\left(
    \begin{array}{cc}
      a & 0 \\
      0 & a^{-1} \\
    \end{array}
  \right)\left(
           \begin{array}{c}
             x \\
             y \\
           \end{array}
         \right)=\left(
                          \begin{array}{c}
                            ax \\
                            a^{-1}y \\
                          \end{array}
                        \right)
$$
The cusps (especially of $X_0(N)$) at $d=1$ are also called infinity cusps (denoted as $\infty$) and cusps at $d=N$ called zero cusps (denoted as $0$).

For each divisor $N'$ of $N$ such that $(N',N/N')=1$, the matrix
$$\left(
    \begin{array}{cc}
      N'a & b \\
      Nc & N'd \\
    \end{array}
  \right)
$$
with $a,b,c,d\in\mathbb{Z}$ and determinant $N'$ defines a Atkin-Lehner involution $\omega_{N'}$ on $X_0(N)$. As noted in \cite{JeonKim}, it also defines an automorphism, not an involution in general, of $X_1(N)$ because it belongs to the normalizer of $\Gamma_1(N)$ in $PSL_2(\mathbb{R})$ \cite{KimKoo}. We also denote this automorphism as $\omega_{N'}$. The Atkin-Lehner involutions are mutually commutative and satisfies the multiplication rule
$$\omega_{N_1}\circ\omega_{N_2}=\omega_{lcm(N_1,N_2)/gcd(N_1,N_2)}$$
The action of $\omega_{N'}$ on the cusps of $X_0(N)$ was given by Ogg \cite[Proposition 2]{Ogg74}. It is easy to see $$\omega_{N'}\left(
               \begin{array}{c}
                 1 \\
                 1 \\
               \end{array}
             \right)=\left(
                       \begin{array}{c}
                         1 \\
                         N' \\
                       \end{array}
                     \right)
$$

The moduli interpretation of noncuspidal points of $X_1(N)$ are $(E,\pm P)$, where $E$ is an elliptic curve and $P\in E$ is a point of order $N$.  The moduli interpretation of noncuspidal points of $X_0(N)$ are $(E, C)$, where $E$ is an elliptic curve and $C\subset E$ is a cyclic subgroup of order $N$. The map $\pi: X_1(N)\longrightarrow X_0(N)$ send $(E,\pm P)$ to $(E,\langle P\rangle)$, where $\langle P\rangle$ is the cyclic subgroup generated by $P$.

Let $p$ be a prime such that $p\nmid N$. Igusa's theorem \cite{Igusa} says that the modular curves $X_1(N)$ and $X_0(N)$ have good reduction at prime $p$. Moreover, reducing the modular curve is compatible with reducing the moduli interpretation (See for example \cite[Theorem 1]{Ogg75}). And the description of the cusps is the same in characteristic $p$ as in characteristic $0$.

Let $K$ be a number field with ring of integers $\mathcal{O}_K$, $\wp\subset\mathcal{O}_K$ a prime ideal lying above $p$, $k=\mathbb{F}_q=\mathcal{O}_K/\wp$ its residue field. Let $E$ be an elliptic curve over $K$ and $P\in E(K)$ a point of order $N$. Let $\widetilde{E}$ be the fibre over $k$ of the N\'{e}ron model of $E$, and let $\widetilde{P}\in\widetilde{E}(k)$ be the reduction of $P$. Suppose that $p\nmid N$. Then elementary theory of group schemes shows that $\widetilde{P}$ has order $N$ due to the following well-known result (See for example \cite[\S 7.3 Proposition 3]{BoschLutkebohmertRaynaud}).

\begin{prop}\label{torsionreduction} Let $m$ be a positive integer relatively prime to $char(k)$. Then the reduction map
$$E(K)[m]\longrightarrow\widetilde{E}(k)$$
is injective.
\end{prop}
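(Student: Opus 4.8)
The plan is to prove injectivity of the reduction map on $m$-torsion by working with the formal group / kernel of reduction and exploiting that $m$ is invertible in the residue field. First I would set up the standard exact sequence associated to the reduction of the Néron model: the reduction map $E(K)\to\widetilde{E}(k)$ restricted to points with good reduction has kernel $E_1(K)$, the points reducing to the identity, which is canonically identified with the group $\widehat{E}(\mathfrak{m})$ of the formal group $\widehat{E}$ evaluated on the maximal ideal $\mathfrak{m}=\wp\mathcal{O}_{K,\wp}$ of the local ring at $\wp$. Thus it suffices to show that $E_1(K)$ contains no nontrivial $m$-torsion, since a point $P\in E(K)[m]$ in the kernel of reduction would then have to be trivial.

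\smallskip

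The heart of the argument is the following: for a point $P\in E_1(K)$ corresponding to $t\in\mathfrak{m}$ under the formal group, the multiplication-by-$m$ map is given by a power series $[m](t)=mt+(\text{higher order terms})$. Because $m$ is a unit in the residue field $k$ of characteristic $\char(k)$ (here $\char(k)=p$ with $p\nmid m$, so $p\nmid N$ supplies exactly this coprimality), the leading coefficient $m$ is a unit in the local ring, and one checks via a valuation (Hensel-type) argument that $[m]\colon\widehat{E}(\mathfrak{m})\to\widehat{E}(\mathfrak{m})$ is injective. Concretely, if $t\neq0$ has valuation $v(t)=n\geq1$, then $v([m](t))=v(mt)=n$ since all higher-order terms have strictly larger valuation, so $[m](t)\neq0$; hence the formal group has no nontrivial $m$-torsion. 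This forces $E_1(K)\cap E(K)[m]=\{0\}$, and therefore the reduction map is injective on $E(K)[m]$.

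\smallskip

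The main obstacle, and the step requiring the most care, is justifying the identification of $\ker\big(E(K)\to\widetilde{E}(k)\big)$ with the formal group $\widehat{E}(\mathfrak{m})$ in the setting of the \emph{Néron model} rather than a chosen Weierstrass equation, and ensuring that the relevant points genuinely lie in the good-reduction locus so that the smooth group scheme structure applies. One must invoke that $\widetilde{E}$ is the special fibre of the Néron model and that reduction respects the group law; the formal completion along the identity section then yields the formal group, whose points over $\mathfrak{m}$ compute the kernel. Since the paper permits citing \cite{BoschLutkebohmertRaynaud}, I would lean on that reference for the clean statement that the kernel of reduction is the formal group and that its torsion prime to the residue characteristic is trivial, rather than reproving the formal-group structure theory from scratch. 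The coprimality hypothesis $\gcd(m,\char(k))=1$ is precisely what makes the leading term of $[m]$ a unit and is the only place this assumption is used.
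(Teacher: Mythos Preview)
Your argument is correct and is the standard formal-group proof of this fact (as in Silverman, \emph{AEC}, VII.3): identify the kernel of reduction with $\widehat{E}(\mathfrak{m})$, then use that $[m](T)=mT+\cdots$ has unit leading coefficient when $m$ is prime to the residue characteristic, so $[m]$ is injective on $\widehat{E}(\mathfrak{m})$ and hence $E_1(K)$ has no nontrivial $m$-torsion.

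The paper, however, does not prove this proposition at all. It is stated as a well-known result and attributed directly to \cite[\S 7.3 Proposition 3]{BoschLutkebohmertRaynaud}; no argument is supplied in the text. So there is no ``paper's own proof'' to compare against---your proposal simply fills in the classical justification that the paper outsources to the literature. The only minor comment is that your closing sentence already anticipates this: you say you would lean on \cite{BoschLutkebohmertRaynaud} rather than reprove the formal-group theory, which is exactly what the paper does in its entirety.
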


Let $k=\mathbb{F}_q$ be the finite field with $q=p^n$ elements. Let $E/k$ be an elliptic curve over $k$. Let $|E(k)|$ be the number of points of $E$ over $k$. Then Hasse's theorem states that
$$||E(k)|-q-1|\leq2\sqrt{q}$$
i.e.
$$(1-\sqrt{p^n})^2\leq|E(k)|\leq(1+\sqrt{p^n})^2$$
Let $t=q+1-|E(k)|$, $E$ is called ordinary if $(t,q)=1$, otherwise it is called supersingular. In the range proposed by Hasse's theorem, all the ordinary $t$ appear, while the supersingular $t$ only appears in restricted case. This was determined by Waterhouse \cite[Theorem 4.1]{Waterhouse}.

For the following two cases which we need in Section \ref{proof}, we list here the representatives of the $k$-isomorphism classes of elliptic curves over $k$.

Case 1:  $p\neq2$ or $3$, $j\neq0,1728$, by Silverman \cite[\S X Proposition 5.4]{Silverman},
we have the following representatives of the two isomorphism classes of elliptic curves over $k$ with $j(E_1)=j(E_2)=j$
$$\aligned &E_1: y^2=x^3-\frac{27j}{j-1728}x+\frac{54j}{j-1728}\\&E_2: y^2=x^3-\frac{27j}{j-1728}\alpha^2x+\frac{54j}{j-1728}\alpha^3\endaligned$$
where $\sqrt{\alpha}\not\in k^\times$. For $k=\mathbb{F}_{5^3}$, we take $\alpha=2$.

Case 2: $p=3$, $j\neq0$,  by Jeong \cite[Theorem 3.6]{Jeong}, we have the following representatives of the two isomorphism classes of elliptic curves over $k$ with $j(E_1)=j(E_2)=j$
$$\aligned &E_1: y^2=x^3+x^2-1/j\\&E_2: y^2=x^3+\alpha x^2-\alpha^3/j\endaligned$$
where $\sqrt{\alpha}\not\in k^\times$. For $k=\mathbb{F}_{3^3}$, we take $\alpha=2$.


\section{Kenku's Method}

Kenku's method was based on the following Lemma \ref{reductionlemma} and Lemma \ref{AtkinLehner}.

\begin{lem}\cite{Wang} \label{reductionlemma}
Suppose$N>4$ such that  $\Gon(X_1(N))>d$, $J_1(N)(\mathbb{Q})$ is finite, $p>2$ is a prime not dividing $N$. Let $K$ be a number field of degree $d$ over $\mathbb{Q}$ and $\wp$ a prime of $K$ over $p$. Let $E_{/K}$ be an elliptic curve with a $K$-rational point $P$ of order $N$, i.e. $x=(E,\pm P)\in Y_1(N)(K)$. Then $E$ has good reduction at $\wp$.
\end{lem}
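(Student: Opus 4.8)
The plan is to argue by contradiction, using the hypotheses on the gonality and the finiteness of $J_1(N)(\mathbb{Q})$ to control rational points of small degree, then transport this control to the reduction and derive a contradiction from Proposition \ref{torsionreduction}. Suppose $E$ has bad reduction at $\wp$. Then the reduced fibre $\widetilde{E}$ over $k=\mathcal{O}_K/\wp$ is singular, so the point $\widetilde{x}=(\widetilde{E},\pm\widetilde{P})$ does not lie in $Y_1(N)(k)$; rather, since reduction of modular curves is compatible with reduction of the moduli interpretation (Igusa, as recalled in the excerpt), $\widetilde{x}$ must land on a cusp of the reduction of $X_1(N)$ modulo $p$. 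The geometric content is that bad reduction of $E$ forces the modular point to degenerate to the boundary.

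From here I would exploit the gonality hypothesis. The condition $\Gon(X_1(N))>d$ means $X_1(N)$ carries no map to $\mathbb{P}^1$ of degree $\le d$, which by a standard argument (a single point of degree $d$ whose Galois conjugates move in a base-point-free pencil would produce such a low-degree map) severely restricts the degree-$d$ points: any $K$-point $x$ of degree $d$ must come from the $\mathbb{Q}$-rational part of the Jacobian. Combined with $J_1(N)(\mathbb{Q})$ being finite, one concludes that the image of $x$ under an Abel--Jacobi embedding, after subtracting a fixed rational divisor, is torsion in $J_1(N)(\mathbb{Q})$, hence takes one of finitely many values. The standard tool here is the specialization (reduction) map on the Jacobian, which is injective on prime-to-$p$ torsion and more generally well-behaved on the finitely generated group; the aim is to show that if $x$ reduced to a cusp, then $x$ itself would have to be cuspidal or would force a degree-$\le d$ map, contradicting either $x\in Y_1(N)(K)$ or the gonality bound.

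Concretely, the cleanest route is: the class of $x$ minus a suitable degree-$d$ cuspidal divisor defines a point of $J_1(N)(K)$ fixed by $\Gal(K/\mathbb{Q})$ up to the finitely many ambiguities, hence lies in $J_1(N)(\mathbb{Q})$, which is finite; the reduction map on this finite group is injective because $p>2$ does not divide $N$ and the relevant torsion is prime to $p$; therefore $x$ reduces to a cusp if and only if its class already equals that of a cusp, which would exhibit $X_1(N)$ as mapping to $\mathbb{P}^1$ by a function with divisor supported on $x$ and cusps of degree $\le d$, violating $\Gon(X_1(N))>d$. This contradiction forces $\widetilde{x}\in Y_1(N)(k)$, so $\widetilde{E}$ is smooth and $E$ has good reduction.

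I expect the main obstacle to be making the second step rigorous: passing from ``$x$ has degree $\le d$ and $J_1(N)(\mathbb{Q})$ is finite'' to the statement that the reduction of $x$ cannot be a cusp without contradicting the gonality. The delicate point is that the gonality bound controls the existence of low-degree \emph{pencils}, whereas what one has in hand is a single low-degree point together with its specialization; the bridge between these is precisely the finiteness of $J_1(N)(\mathbb{Q})$, which prevents the point and its reduction from being linearly equivalent unless they are genuinely equal. Since this is cited as Lemma \ref{reductionlemma} from \cite{Wang}, the detailed verification that good reduction of $X_1(N)$ and injectivity of the torsion reduction map combine to forbid boundary specialization of a genuine noncuspidal point is what carries the real weight, and I would rely on the specialization-map formalism to close the gap cleanly.
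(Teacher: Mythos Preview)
The paper does not give its own proof of this lemma; it is quoted with the citation \cite{Wang} and no proof environment follows. So there is nothing in the present paper to compare your argument against directly.

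That said, your outline is the standard Kamienny--Mazur style argument and is essentially correct: form the $\mathbb{Q}$-rational divisor $D = x_1 + \cdots + x_d$ of Galois conjugates of $x$, so $[D - d\cdot\infty]\in J_1(N)(\mathbb{Q})$, which is torsion by hypothesis; if $E$ has bad reduction at $\wp$ then each $x_i$ reduces to a cusp, hence $[D - d\cdot\infty]$ reduces to a cuspidal class; injectivity of reduction on torsion (for $p>2$) then forces $D$ to be linearly equivalent to an effective cuspidal divisor of degree $d$ already in characteristic $0$; since $x$ is noncuspidal these divisors are distinct, so the linear equivalence produces a nonconstant function of degree $\le d$ on $X_1(N)$, contradicting $\Gon(X_1(N))>d$.

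One point to tighten: your justification that ``the relevant torsion is prime to $p$'' is not the right reason for injectivity of the specialization map on $J_1(N)(\mathbb{Q})_{\mathrm{tor}}$. You have no control over whether $p$ divides $|J_1(N)(\mathbb{Q})|$. The correct input is that for an abelian variety with good reduction over a $p$-adic field, the kernel of reduction on torsion lies in the formal group, and for $p>2$ (with the ramification present here) the formal group is torsion-free; this is exactly why the hypothesis $p>2$ appears in the statement. With that correction your sketch is sound, and it is almost certainly the argument carried out in \cite{Wang}.
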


The following Proposition is used in the proofs of Lemma \ref{AtkinLehner} and Lemma \ref{Frobenius} without declaration.

\begin{prop}\cite[Theorem 8.5.7]{DiamondShurman} Let $C$ and $C'$ be nonsingular projective algebraic curves over $\mathbb{Q}$ with good reduction at $p$ and let $C'$ have positive genus. For any morphism $h:C\longrightarrow C'$ the following diagram is commutative:
$$\xymatrix {C\ar[d]_{\otimes\overline{\mathbb{F}}_p}\ar[r]^{h}&C'\ar[d]^{\otimes\overline{\mathbb{F}}_p}\\
\widetilde{C}\ar[r]^{\widetilde{h}}&\widetilde{C}'}
$$
\end{prop}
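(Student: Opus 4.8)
The plan is to spread $C$, $C'$, and $h$ out over the local base $R=\mathbb{Z}_{(p)}$ and reduce the whole statement to a single extension problem. Since $C$ and $C'$ have good reduction at $p$, I would fix smooth proper models $\mathcal{C},\mathcal{C}'\to\Spec R$ whose generic fibres recover $C,C'$ and whose special fibres (after $\otimes\,\overline{\mathbb{F}}_p$) are $\widetilde{C},\widetilde{C}'$. Because $R$ is regular and the models are smooth over it, $\mathcal{C}$ is in particular a regular integral two-dimensional scheme, i.e. an arithmetic surface. The entire content of the proposition then follows once I produce an $R$-morphism $\Phi\colon\mathcal{C}\to\mathcal{C}'$ whose restriction to the generic fibre is $h$: its special fibre will be the sought map $\widetilde{h}$, and the commutativity of the reduction square will be automatic (see the last paragraph).

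First I would observe that $h$ defines a rational map $\phi\colon\mathcal{C}\dashrightarrow\mathcal{C}'$ over $R$, defined at least on the dense generic fibre, and then locate its indeterminacy. Because $\mathcal{C}'$ is proper, the valuative criterion of properness lets me extend $\phi$ across every codimension-one point of $\mathcal{C}$, since the local ring at such a point is a discrete valuation ring ($\mathcal{C}$ being regular, hence normal). Consequently the indeterminacy locus, which is closed, contains no codimension-one point and therefore has codimension $\geq 2$ in the two-dimensional $\mathcal{C}$. It is thus a finite set of closed points, all lying in the special fibre $\widetilde{C}$.

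The crux, and the step I expect to be the main obstacle, is eliminating these finitely many bad points, and this is exactly where the positive genus of $C'$ enters. I would invoke elimination of indeterminacy for regular surfaces to obtain a proper birational $\beta\colon\mathcal{C}''\to\mathcal{C}$, a composite of blow-ups at closed points, together with an honest morphism $\psi\colon\mathcal{C}''\to\mathcal{C}'$ agreeing with $\phi\circ\beta$ on the generic fibre. Over each bad point $x$ the fibre $\beta^{-1}(x)$ is a connected configuration (a tree) of copies of $\mathbb{P}^1$, and $\psi$ maps it into the special fibre $\widetilde{C}'$. Since $\widetilde{C}'$ has positive genus, every morphism $\mathbb{P}^1\to\widetilde{C}'$ is constant (a nonconstant one would force $g(\widetilde{C}')=0$ by L\"uroth / Riemann--Hurwitz), so $\psi$ contracts the connected fibre $\beta^{-1}(x)$ to a single point. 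A rigidity (contraction) argument, using that $\beta$ is proper birational with connected fibres and $\mathcal{O}_{\mathcal{C}}\xrightarrow{\sim}\beta_*\mathcal{O}_{\mathcal{C}''}$ by normality, then shows $\psi$ factors through $\beta$; that is, $\phi$ was in fact already defined at $x$. Repeating over each bad point yields the desired morphism $\Phi\colon\mathcal{C}\to\mathcal{C}'$. Without this positive-genus input the blow-ups introduced to resolve $\phi$ need not be collapsible and $\phi$ could genuinely fail to extend, which is precisely why the hypothesis on $C'$ is indispensable.

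Finally I would deduce the commutative square. Given a point $P$ of $C$ valued in a finite extension $L/\mathbb{Q}_p$, properness of $\mathcal{C}$ extends it uniquely to an $\mathcal{O}_L$-point $\mathcal{P}$, whose reduction is the left-hand vertical map; then $\Phi\circ\mathcal{P}$ is an $\mathcal{O}_L$-point of $\mathcal{C}'$ extending $h(P)$, and by uniqueness of such extensions (the valuative criterion again, using separatedness) its reduction equals both $\widetilde{h(P)}$ and $\widetilde{h}(\widetilde{P})$. Thus reduction commutes with $h$ on points, which is the assertion encoded by the diagram; passing to the $\overline{\mathbb{F}}_p$-fibres identifies the bottom arrow as the stated $\widetilde{h}$.
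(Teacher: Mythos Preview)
The paper does not prove this proposition at all: it simply quotes it as \cite[Theorem 8.5.7]{DiamondShurman} and uses it as a black box in the proofs of Lemmas~\ref{AtkinLehner} and~\ref{Frobenius}, so there is no in-paper argument to compare against. Your proposal is correct and is precisely the standard argument (and essentially the one Diamond--Shurman sketch): spread out to smooth proper models over $\mathbb{Z}_{(p)}$, use the valuative criterion to extend the rational map across all codimension-one points, then resolve the finitely many remaining indeterminacy points by blowing up and invoking the positive-genus hypothesis on $\widetilde{C}'$ to contract the exceptional $\mathbb{P}^1$'s.
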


\begin{lem}\label{AtkinLehner} Suppose $g(X_0(N))>0$. Let $p$ be a prime not dividing $N$. Let $K$ be a number field of degree $d$ over $\mathbb{Q}$ and $\wp$ a prime of $K$ over $p$. Let $E_{/K}$ be an elliptic curve with a $K$-rational point $P$ of order $N$, i.e. $x=(E,\pm P)\in Y_1(N)(K)$. Suppose $E$ has good reduction at $\wp$. Then $\omega_{N'}(\pi(x)\otimes\overline{\mathbb{F}}_p)$ also corresponds to an elliptic curve with a point of order $N$.
\end{lem}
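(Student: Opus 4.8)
The plan is to track the moduli data first through reduction modulo $\wp$ and then through the Atkin--Lehner involution, reducing the whole statement to the single dichotomy ``cusp versus noncuspidal point'' on $X_0(N)$. Since $\pi$ sends $(E,\pm P)$ to $(E,\langle P\rangle)$, we have $\pi(x)=(E,\langle P\rangle)\in Y_0(N)(K)$. Because $E$ has good reduction at $\wp$, its N\'eron fibre $\widetilde E$ is a genuine elliptic curve, and since $p\nmid N$, Proposition \ref{torsionreduction} guarantees that the reduction $\widetilde P$ of $P$ still has exact order $N$, so $\langle\widetilde P\rangle$ is cyclic of order $N$. Invoking Igusa's theorem together with the compatibility of reduction with the moduli interpretation, and the commutative-reduction Proposition (\cite[Theorem 8.5.7]{DiamondShurman}) applied to $\pi\colon X_1(N)\to X_0(N)$ --- legitimate precisely because $g(X_0(N))>0$ makes the target of positive genus --- one identifies
$$\pi(x)\otimes\overline{\mathbb{F}}_p=(\widetilde E,\langle\widetilde P\rangle),$$
a \emph{noncuspidal} point of $\widetilde{X_0(N)}=X_0(N)\otimes\overline{\mathbb{F}}_p$.

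Next I would apply $\omega_{N'}$. As a self-morphism of $X_0(N)$ it is defined over $\mathbb{Q}$ and, again by the commutative-reduction Proposition (here $g(X_0(N))>0$ is used once more), reduces to an automorphism $\widetilde{\omega_{N'}}$ of $\widetilde{X_0(N)}$ compatible with the reduction of points. The crucial point is that $\omega_{N'}$ preserves the cusp locus: Ogg's description of its action on cusps (\cite[Proposition 2]{Ogg74}) is purely combinatorial in the pairs $\pm\binom{x}{y}$ and, since $p\nmid N$, is the same in characteristic $p$ as in characteristic $0$. Being an automorphism that permutes the finite set of cusps, $\omega_{N'}$ therefore carries noncuspidal points to noncuspidal points, so $\omega_{N'}(\pi(x)\otimes\overline{\mathbb{F}}_p)$ is noncuspidal. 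By the moduli interpretation, a noncuspidal point of $X_0(N)$ over $\overline{\mathbb{F}}_p$ with $p\nmid N$ is a pair $(E_1,C_1)$ with $E_1$ an elliptic curve and $C_1\subset E_1$ cyclic of order $N$; any generator of $C_1$ is a point of order $N$, which is exactly the assertion.

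The step I expect to be the main obstacle is the bookkeeping of the two commutativities --- of reduction with $\pi$ and with $\omega_{N'}$ --- and especially the claim that $\omega_{N'}$ respects the cusp/noncuspidal dichotomy in characteristic $p$; the hypothesis $g(X_0(N))>0$ is exactly what licenses the commutative-reduction Proposition, so the argument genuinely needs it. As an alternative to the cusp argument one can use the explicit moduli action: writing $N=N'M$ with $(N',M)=1$ and splitting $\langle\widetilde P\rangle=C_{N'}\oplus C_M$ into its $N'$- and $M$-primary parts, one has
$$\omega_{N'}(\widetilde E,\langle\widetilde P\rangle)=\bigl(\widetilde E/C_{N'},\,(\widetilde E[N']+\langle\widetilde P\rangle)/C_{N'}\bigr),$$
whose second entry is cyclic of order $N$ since $\widetilde E[N']/C_{N'}$ is cyclic of order $N'$ and coprime to $M$; as $p\nmid N$ the $N$-torsion is \'etale, so this description remains valid over $\overline{\mathbb{F}}_p$ and directly exhibits the required elliptic curve with a point of order $N$.
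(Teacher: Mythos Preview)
Your argument is correct, but it follows a different route from the paper's. You work entirely on $X_0(N)$: reduce $\pi(x)$ first to the noncuspidal point $(\widetilde E,\langle\widetilde P\rangle)$, then apply $\omega_{N'}$ on $\widetilde{X_0(N)}$ and invoke the fact that Atkin--Lehner involutions permute the cusps (hence preserve $Y_0(N)$) to conclude noncuspidality. The paper instead lifts $\omega_{N'}$ to the automorphism of $X_1(N)$ and uses two commutative squares to rewrite
\[
\omega_{N'}\bigl(\pi(x)\otimes\overline{\mathbb{F}}_p\bigr)=\pi\bigl(\omega_{N'}(x)\otimes\overline{\mathbb{F}}_p\bigr),
\]
observing that $\omega_{N'}(x)\in Y_1(N)$ still has good reduction, so its reduction is a genuine pair $(E',P')$ on $\widetilde{Y_1(N)}$ with $P'$ of exact order $N$; pushing down by $\pi$ gives the claim. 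Thus the paper's key step is ``$\omega_{N'}$ preserves $Y_1(N)$ in characteristic $0$,'' whereas yours is ``$\omega_{N'}$ preserves $Y_0(N)$ in characteristic $p$''; both rest on the same combinatorial fact that Atkin--Lehner maps cusps to cusps. Your approach is more self-contained on $X_0(N)$ and sidesteps the automorphism of $X_1(N)$ (whose field of definition, as the paper itself later stresses, is $\mathbb{Q}(\zeta_N)$ rather than $\mathbb{Q}$). The paper's approach, on the other hand, exhibits the image point explicitly as $\pi$ of a point of $\widetilde{Y_1(N)}$ coming from characteristic~$0$, which is exactly the shape reused in the proof of the next lemma (Lemma~\ref{Frobenius}); your explicit moduli description of $\omega_{N'}(\widetilde E,\langle\widetilde P\rangle)$ achieves the same end by a different computation.
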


\begin{proof} By the definitions of Atkin-Lehner involution $\omega_{N'}$ on $X_0(N)$ and the automorphism $\omega_{N'}$ on $X_1(N)$, the following diagram is commutative
$$\xymatrix {Y_1(N)\ar[d]^{\omega_{N'}}\ar[r]^{\pi}&Y_0(N)\ar[d]^{\omega_{N'}}\ar[r]^{\otimes\overline{\mathbb{F}}_p}&\widetilde{Y}_0(N)\ar[d]^{\omega_{N'}}\\
Y_1(N)\ar[r]^{\pi}&Y_0(N)\ar[r]^{\otimes\overline{\mathbb{F}}_p}&\widetilde{Y}_0(N)\\
}
$$
The following diagram is also commutative
$$\xymatrix {Y_1(N)\ar[d]^{\otimes\overline{\mathbb{F}}_p}\ar[r]^{\pi}&Y_0(N)\ar[d]^{\otimes\overline{\mathbb{F}}_p}\\
\widetilde{Y}_1(N)\ar[r]^{\pi}&\widetilde{Y}_0(N)}
$$
So
$$\omega_{N'}(\pi(x)\otimes\overline{\mathbb{F}}_p)=\pi(\omega_{N'}(x))\otimes\overline{\mathbb{F}}_p
=\pi(\omega_{N'}(x)\otimes\overline{\mathbb{F}}_p)$$
Since $x$ has good reduction, then $\omega_{N'}(x)$ also has good reduction. Now that $\omega_{N'}(x)\otimes\overline{\mathbb{F}}_p$ is the reduction of the point $\omega_{N'}(x)$ on $X_1(N)$, then it corresponds to an elliptic curve with a point of order $N$. Therefore $\omega_{N'}(\pi(x)\otimes\overline{\mathbb{F}}_p)=\pi(\omega_{N'}(x)\otimes\overline{\mathbb{F}}_p)$ also corresponds to an elliptic curve with a point of order $N$.

\end{proof}

\begin{lem}\label{Frobenius} Suppose $g(X_0(N))>0$. Let $p$ be a prime not dividing $N$. Let $K$ be a cubic number field and $\wp$ a prime of $K$ over $p$ with residue field $k=\mathbb{F}_q=\mathbb{F}_{p^3}$. Let $E_{/K}$ be an elliptic curve with a $K$-rational point $P$ of order $N$, i.e. $x=(E,\pm P)\in Y_1(N)(K)$. Suppose $E$ has good reduction at $\wp$. If the Galois covering $\pi:X_1(N)\longrightarrow X_0(N)$ factors as
$$X_1(N)\xrightarrow{~~~\pi_1~~~}X_\Delta(N)\xrightarrow{~~~\pi_2~~~}X_0(N)$$
with $g(X_\Delta(N))=g(X_0(N))=1$. Then on $\widetilde{X}_0(N)$, $$\pi(x)\otimes\overline{\mathbb{F}}_p+\phi(\pi(x)\otimes\overline{\mathbb{F}}_p)
+\phi^2(\pi(x)\otimes\overline{\mathbb{F}}_p)=\pi_2(z)\otimes\overline{\mathbb{F}}_p$$
for some rational point $z$ on $X_\Delta(N)$, where $$\aligned\phi:\widetilde{X}_0(N)&\longrightarrow \widetilde{X}_0(N)\\(x,y)&\longmapsto(x^p,y^p)\endaligned$$
is induced by the Frobenius map $\phi:\overline{\mathbb{F}}_p\longrightarrow\overline{\mathbb{F}}_p, x\longmapsto x^p$. Moreover, for any Atkin-Lehner $\omega_{N'}$, if $\omega_{N'}(x)$ is cubic, then
$$\omega_{N'}(\pi(x)\otimes\overline{\mathbb{F}}_p)+\phi(\omega_{N'}(\pi(x)\otimes\overline{\mathbb{F}}_p))
+\phi^2(\omega_{N'}(\pi(x)\otimes\overline{\mathbb{F}}_p))=\pi_2(z_{N'})\otimes\overline{\mathbb{F}}_p$$
for some rational point $z_{N'}$ on $X_\Delta(N)$.
\end{lem}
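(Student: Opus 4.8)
The plan is to promote the genus-one curves $X_\Delta(N)$ and $X_0(N)$ to elliptic curves over $\mathbb{Q}$, observe that $\pi_2$ is then an \emph{isogeny}, produce $z$ as a ``trace'' of $\pi_1(x)$, and transport everything through reduction; the only genuinely arithmetic input is that $p$ is inert in $K$. Concretely, take the infinity cusp as origin on each curve. It is $\mathbb{Q}$-rational and $\pi_2$ carries the infinity cusp of $X_\Delta(N)$ to that of $X_0(N)$, so both become elliptic curves over $\mathbb{Q}$, and by the standard fact that a morphism of elliptic curves taking origin to origin is a homomorphism, $\pi_2$ is an isogeny. By Igusa's theorem these curves have good reduction at $p$ with cusps reducing to cusps; hence the reduced origins are $\mathbb{F}_p$-rational (so fixed by $\phi$), the reduction $\widetilde\pi_2$ is again an isogeny over $\mathbb{F}_p$ commuting with $\phi$, and the reduction maps on points are group homomorphisms. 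Write $\oplus$ for the group law, $\bar x=\pi(x)\otimes\overline{\mathbb{F}}_p$ and $\bar y=\pi_1(x)\otimes\overline{\mathbb{F}}_p$, and note $\widetilde\pi_2(\bar y)=\bar x$ by compatibility of $\pi_2$ with reduction.

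Next I set $y=\pi_1(x)\in X_\Delta(N)(K)$. Since the group law is defined over $\mathbb{Q}$ and $\mathrm{Gal}(\overline{\mathbb{Q}}/\mathbb{Q})$ permutes the conjugates $y^{\sigma}$ ($\sigma\colon K\hookrightarrow\overline{\mathbb{Q}}$), the point $z:=\bigoplus_{\sigma}y^{\sigma}$ is Galois-stable, hence $z\in X_\Delta(N)(\mathbb{Q})$. Now the arithmetic input enters: $[K:\mathbb{Q}]=3$ together with residue field $\mathbb{F}_{p^3}$ forces $p$ to be inert in $K$, so the closed point of $X_\Delta(N)_{/\mathbb{Q}}$ cut out by $y$ specializes, on the good model, to a single closed point of residue field $\mathbb{F}_{p^3}$, whose geometric points form one Frobenius orbit; thus the reductions $\overline{y^{\sigma}}$ are exactly $\bar y,\phi(\bar y),\phi^2(\bar y)$. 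Because reduction is a homomorphism,
$$z\otimes\overline{\mathbb{F}}_p=\bigoplus_{\sigma}\overline{y^{\sigma}}=\bar y\oplus\phi(\bar y)\oplus\phi^2(\bar y).$$

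Applying the isogeny $\widetilde\pi_2$ to this identity, and using $\widetilde\pi_2\circ\phi=\phi\circ\widetilde\pi_2$ together with $\widetilde\pi_2(\bar y)=\bar x$, gives
$$\pi_2(z)\otimes\overline{\mathbb{F}}_p=\widetilde\pi_2(\bar y)\oplus\widetilde\pi_2(\phi\bar y)\oplus\widetilde\pi_2(\phi^2\bar y)=\bar x\oplus\phi(\bar x)\oplus\phi^2(\bar x),$$
which is the assertion. For the moreover part I run the same argument with the cubic point $\omega_{N'}(x)$ in place of $x$ to obtain a rational point $z_{N'}\in X_\Delta(N)(\mathbb{Q})$, then rewrite $\pi(\omega_{N'}(x))=\omega_{N'}(\pi(x))$ via the commutativity of Lemma \ref{AtkinLehner}; since $\omega_{N'}$ commutes with reduction and with $\phi$, the displayed identity for $\omega_{N'}$ follows.

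The crux is the inert-reduction step, namely identifying the reductions of the arithmetic Galois conjugates $y^{\sigma}$ with the geometric Frobenius orbit $\{\bar y,\phi\bar y,\phi^2\bar y\}$: this is exactly where the hypothesis $k=\mathbb{F}_{p^3}$ (equivalently, $p$ inert in $K$) is indispensable, and it must be combined carefully with the homomorphism property of reduction and with the identification of $\pi_2$ as an isogeny. A secondary point to verify in the moreover part is that the cubic field of definition of $\omega_{N'}(x)$ is again inert at $p$, so that the same Frobenius-orbit identity applies; when $\omega_{N'}$ is defined over the ground field this field coincides with $K$ and inertness is inherited.
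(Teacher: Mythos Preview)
Your proof is correct and follows essentially the same route as the paper: both turn $X_\Delta(N)$ and $X_0(N)$ into elliptic curves over $\mathbb{Q}$ by taking an infinity cusp as origin so that $\pi_2$ becomes an isogeny, form the rational trace $z=\bigoplus_\sigma \pi_1(x)^\sigma$, and then identify the reductions of the Galois conjugates with the Frobenius orbit of $\bar y$ before pushing through $\widetilde\pi_2$. The only cosmetic difference is that the paper makes the last identification explicit by passing to the Galois closure $L$ of $K$ and lifting $\phi$ to a Frobenius element $\mathrm{Frob}_{\wp'}\in\Gal(L/\mathbb{Q})$ of order $3$, whereas you phrase it as specialization of the closed point cut out by $y$; the content is identical.
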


\begin{proof} Consider the Galois closure $L$ of $K$. Then either $\Gal(L/\mathbb{Q})=\mathbb{Z}/3\mathbb{Z}$ or $\Gal(L/\mathbb{Q})=S_3$. We claim that there is an element $\sigma\in\Gal(L/\mathbb{Q})$ of order $3$, such that (after a necessary rearrangement) the embeddings $\tau_i:K\hooklongrightarrow\mathbb{C}, 1\leq i\leq 3$ satisfy
$$\tau_i=\sigma^i|_K$$

In fact, if $\Gal(L/\mathbb{Q})=\mathbb{Z}/3\mathbb{Z}$, then $L=K$, i.e. $K/\mathbb{Q}$ is a Galois extension. So $\Gal(K/\mathbb{Q})=\{\tau_1,\tau_2,\tau_3\}$. Let $\sigma$ be a generator of $\Gal(K/\mathbb{Q})=\mathbb{Z}/3\mathbb{Z}$. Then, after a necessary rearrangement of $\tau_1,\tau_2$ and $\tau_3$, we have $\tau_i=\sigma^i$.

Otherwise, if $\Gal(L/\mathbb{Q})=S_3$, then $L/K$ is a quadratic extension. There is an element $\sigma_2\in\Gal(L/\mathbb{Q})$ of order $2$ such that $\Gal(L/K)=\langle\sigma_2\rangle$. Let $\sigma_3\in\Gal(L/\mathbb{Q})$ be an element of order $3$. Then $\Gal(L/\mathbb{Q})=\langle\sigma_2,\sigma_3\rangle$. On the other hand, each $\tau_i$ extends to two embedding $\tau_{i1},\tau_{i2}:L\hooklongrightarrow\mathbb{Q}$ and $\Gal(L/\mathbb{Q})=\{\tau_{11},\tau_{12},\tau_{21},\tau_{22},\tau_{31},\tau_{32}\}$. Without loss of generality, suppose $\tau_3$ is the identity embedding, then $\{\tau_{31},\tau_{32}\}=\{id,\sigma_2\}$, and after a necessary rearrangement of $\tau_1$ and $\tau_2$, $\{\tau_{11},\tau_{12}\}=\{\sigma_3,\sigma_3\sigma_2\}, \{\tau_{21},\tau_{22}\}=\{\sigma_3^2,\sigma_3^2\sigma_2\}$. Let $\sigma=\sigma_3$. Then $\tau_i=\sigma^i|_K$.

The projection of the rational divisor class $[x_1+x_2+x_3-d\infty]\in J_1(N)$ on $J_\Delta(N)$ is also rational. i.e.
$$\aligned \pi_{1*}(\left[\sigma(x)+\sigma^2(x)+\sigma^3(x)-d\infty\right])&=\pi_{1*}([\tau_1(x)+\tau_2(x)+\tau_3(x)-d\infty])\\&=\pi_{1*}([x_1+x_2+x_3-d\infty])\in J_\Delta(N)(\mathbb{Q})
\endaligned$$
Since the genus of $X_\Delta(N)$ is $1$, i.e. $X_\Delta(N)$ is an elliptic curve,  then there is a canonical isomorphism
$$\aligned \iota: J_\Delta(N)&\longrightarrow X_\Delta(N)\\\left[\sum_i n_ix_i\right]&\longmapsto\sum_i n_ix_i\endaligned$$
where the right $\sum_i$ is the group law on the elliptic curve $X_\Delta(N)$.
Since the Galois action is compatible with the projection, i.e. the following diagram is commutive
$$\xymatrix{J_1(N)\ar[d]^{\sigma}\ar[r]^{\pi_{1*}}&J_\Delta(N)\ar[d]^{\sigma}\\
J_1(N)\ar[r]^{\pi_{1*}}&J_\Delta(N)}
$$
then
$$\aligned&\iota\circ\pi_{1*}(\left[\sigma(x)+\sigma^2(x)+\sigma^3(x)-d\infty\right])\\=&\iota(\left[\sigma(\pi_1(x))+\sigma^2(\pi_1(x))+\sigma^3(\pi_1(x))-d\infty\right])
\\=&\sigma(\pi_1(x))+\sigma^2(\pi_1(x))+\sigma^3(\pi_1(x))\in X_\Delta(N)(\mathbb{Q})
\endaligned$$
i.e. there is a rational point $z$ on $X_\Delta(N)$ such that
$$\sigma(\pi_1(x))+\sigma^2(\pi_1(x))+\sigma^3(\pi_1(x))=z$$

Let $\wp'$ be a prime of $L$ over $\wp$ with residue field $k'=\mathcal{O}_L/\wp'$ (which is an extension of $k$). It is known in algebraic number theory that the Frobenius $\phi\in\Gal(k'/\mathbb{F}_p)$ is the reduction from a Frobenius element $Frob_{\wp'}$ in $\Gal(L/\mathbb{Q})$. It is easy to see $k'=k$ since the highest order of an element in $\Gal(L/\mathbb{Q})$ is $3$. Now, we have an element $\sigma'(:=Frob_{\wp'})\in \Gal(L/\mathbb{Q})$ of order $3$ such that the following diagram is commutative:
$$\xymatrix{X_\Delta(N)\ar[r]^{\pi_2}\ar[d]^{\sigma'}&X_0(N)\ar[r]^{\otimes\overline{\mathbb{F}}_p}\ar[d]^{\sigma'}&\widetilde{X}_0(N)\ar[d]^{\phi}\\
X_\Delta(N)\ar[r]^{\pi_2}&X_0(N)\ar[r]^{\otimes\overline{\mathbb{F}}_p}&\widetilde{X}_0(N)}
$$
It is easy to know either $\sigma'=\sigma$ or $\sigma'=\sigma^2$. In both cases, we always have
$$id+\sigma'+\sigma'^2=\sigma+\sigma^2+\sigma^3$$
Since both $X_\Delta(N)$ and $X_0(N)$ are elliptic curves, they are equipped with the group law. We can always choose an infinity cusp $\infty_\Delta$ on $X_\Delta(N)$ as the zero point $O_\Delta$ and the unique infinity cusp $\infty_1$ on $X_0(N)$ as the zero point $O_1$. Since the projection $\pi_2:X_\Delta(N)\longrightarrow X_0(N)$ maps $\infty_\Delta$ to $\infty_1$, then it is an isogeny, thus preserves the group law. And our assumption $p\nmid N$ guarantees $X_0(N)$ have good reduction at $\wp$. Then the group law is also preserved during reduction. In other words, the following diagram is commutative:
$$\xymatrix{X_\Delta(N)\ar[r]^{\pi_2}\ar[d]^{+}&X_0(N)\ar[r]^{\otimes\overline{\mathbb{F}}_p}\ar[d]^{+}&\widetilde{X}_0(N)\ar[d]^{+}\\
X_\Delta(N)\ar[r]^{\pi_2}&X_0(N)\ar[r]^{\otimes\overline{\mathbb{F}}_p}&\widetilde{X}_0(N)}
$$
Therefore
$$\aligned&\pi(x)\otimes\overline{\mathbb{F}}_p+\phi(\pi(x)\otimes\overline{\mathbb{F}}_p)
+\phi^2(\pi(x)\otimes\overline{\mathbb{F}}_p)\\
=&(\pi_2\circ\pi_1(x))\otimes\overline{\mathbb{F}}_p+\phi((\pi_2\circ\pi_1(x))\otimes\overline{\mathbb{F}}_p)
+\phi^2((\pi_2\circ\pi_1(x))\otimes\overline{\mathbb{F}}_p)\\
=&\pi_2(\pi_1(x))\otimes\overline{\mathbb{F}}_p+\pi_2(\sigma'(\pi_1(x)))\otimes\overline{\mathbb{F}}_p
+\pi_2(\sigma'^2(\pi_1(x)))\otimes\overline{\mathbb{F}}_p\\
=&(\pi_2(\pi_1(x))+\pi_2(\sigma'(\pi_1(x)))
+\pi_2(\sigma'^2(\pi_1(x))))\otimes\overline{\mathbb{F}}_p\\
=&\pi_2(\pi_1(x)+\sigma'(\pi_1(x))
+\sigma'^2(\pi_1(x)))\otimes\overline{\mathbb{F}}_p\\
=&\pi_2(\sigma(\pi_1(x))+\sigma^2(\pi_1(x))
+\sigma^3(\pi_1(x)))\otimes\overline{\mathbb{F}}_p\\
=&\pi_2(z)\otimes\overline{\mathbb{F}}_p\\
\endaligned$$
as desired.

Moreover, for any Atkin-Lehner involution $\omega_{N'}$,  we have
$$\omega_{N'}(\pi(x)\otimes\overline{\mathbb{F}}_p)=\pi(\omega_{N'}(x))\otimes\overline{\mathbb{F}}_p$$
because of the following commutative diagram:$$\xymatrix {Y_1(N)\ar[d]^{\omega_{N'}}\ar[r]^{\pi}&Y_0(N)\ar[d]^{\omega_{N'}}\ar[r]^{\otimes\overline{\mathbb{F}}_p}&\widetilde{Y}_0(N)\ar[d]^{\omega_{N'}}\\
Y_1(N)\ar[r]^{\pi}&Y_0(N)\ar[r]^{\otimes\overline{\mathbb{F}}_p}&\widetilde{Y}_0(N)\\
}
$$
If $\omega_{N'}(x)$ is cubic, then we all the arguments that work for $x$ also work for $\omega_{N'}(x)$.
Therefore
$$\aligned&\omega_{N'}(\pi(x)\otimes\overline{\mathbb{F}}_p)+\phi(\omega_{N'}(\pi(x)\otimes\overline{\mathbb{F}}_p))
+\phi^2(\omega_{N'}(\pi(x)\otimes\overline{\mathbb{F}}_p))\\
=&\pi(\omega_{N'}(x))\otimes\overline{\mathbb{F}}_p+\phi(\pi(\omega_{N'}(x))\otimes\overline{\mathbb{F}}_p)
+\phi^2(\pi(\omega_{N'}(x))\otimes\overline{\mathbb{F}}_p)\\
=&\pi_2(z_{N'})\otimes\overline{\mathbb{F}}_p\endaligned$$
for some rational point $z_{N'}$ on $X_\Delta(N)$.
\end{proof}

\section{The 32- and 24-torsion of elliptic curves over cubic fields}\label{proof}

While the Atkin-Lehner involution $\omega_N$ on $X_0(N)$ is defined over $\mathbb{Q}$, the Atkin-Lehner involution $\omega_N$ on $X_1(N)$ is actually defined over $\mathbb{Q}(\zeta_N)$ rather than $\mathbb{Q}$ as we wish. In this section, we \texttt{PRETEND} that the Atkin-Lehner involution $\omega_N$ on $X_1(N)$ is defined over $\mathbb{Q}$. Under this assumption, for all $N'$ with $(N',N/N')=1$, $\omega_{N'}(x)$ is cubic if $x$ is cubic (as required in the last statement of Lemma \ref{Frobenius}). With this \texttt{FALSE} assumption, the Kenku's method is used to "\texttt{PROVE}" the nonexistence of 32- and 24-torsion on elliptic curves over cubic fields.

\subsection{$N=32$}

As is shown in \cite{Wang}, $J_1(N)(\mathbb{Q})$ is finite. From Jeon-Kim-Schweizer \cite{JeonKimSchweizer}, we know $Gon(X_1(N))>3$. Let $K$ be a cubic field and $\wp$ a prime of $K$ over $3$. Suppose $x=(E,\pm P)\in Y_1(N)(K)$. Therefore by Lemma \ref{reductionlemma}, $E$ has good reduction at $\wp$. By Proposition \ref{torsionreduction}, the reduction $\widetilde{P}$ of $P$ is a $k$-rational point of order $N$ in the elliptic curve $\widetilde{E}$ over $k=\mathcal{O}_K/\wp$.

The following lemma shows that $\widetilde{E}(k)$ can not have a point of order $32$. This contradiction implies that $\mathbb{Z}/32\mathbb{Z}$ is not a subgroup of $E(K)_{tor}$. The calculations on the finite field $\mathbb{F}_{3^3}$ in this Lemma is done in Sage \cite{Sage}.

\begin{lem}
$\widetilde{E}(k)$ can not have a point of order $32$
\end{lem}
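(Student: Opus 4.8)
The plan is to show that no elliptic curve $\widetilde{E}$ over $k=\mathbb{F}_{3^3}=\mathbb{F}_{27}$ can contain a point of order $32$. Since $32=2^5$, by Proposition~\ref{torsionreduction} this amounts to controlling the $2$-part of the group $\widetilde{E}(k)$ for every isomorphism class of elliptic curve over $\mathbb{F}_{27}$. First I would invoke Hasse's bound: with $q=27$, the group order $|\widetilde{E}(k)|$ lies in the interval $[q+1-2\sqrt q,\,q+1+2\sqrt q]=[28-2\sqrt{27},\,28+2\sqrt{27}]$, i.e. roughly $[17.6,\,38.4]$, so $|\widetilde{E}(k)|\in\{18,19,\dots,38\}$. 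A point of order $32$ would force $32\mid|\widetilde{E}(k)|$, and the only multiple of $32$ in this range is $32$ itself. Hence it suffices to rule out $|\widetilde{E}(k)|=32$ having a \emph{cyclic} subgroup of order $32$, and more precisely to show that whenever $|\widetilde{E}(k)|=32$ the group $\widetilde{E}(k)$ has $2$-rank at least $2$, so that its exponent is strictly less than $32$.

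\textbf{Main computation.} The crux is therefore to determine the group structure $\widetilde{E}(\mathbb{F}_{27})$ for those curves whose trace $t=q+1-|\widetilde{E}(k)|$ satisfies $|\widetilde{E}(k)|=32$, i.e. $t=-4$. I would enumerate the $\mathbb{F}_{27}$-isomorphism classes of elliptic curves using the explicit representatives recorded in the preliminaries: for $j\neq 0$ one uses the Case~2 models $E_1:y^2=x^3+x^2-1/j$ and $E_2:y^2=x^3+\alpha x^2-\alpha^3/j$ with $\alpha=2$ (a nonsquare in $\mathbb{F}_{27}$), giving the two twists for each $j$, and I would treat the special value $j=0$ (supersingular in characteristic $3$) separately. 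For each such curve one computes the trace $t$ and, when $t=-4$, the full group structure $\widetilde{E}(k)\cong\mathbb{Z}/m\mathbb{Z}\times\mathbb{Z}/n\mathbb{Z}$ with $m\mid n$. The assertion is that in every case with order $32$ one finds $\widetilde{E}(k)\cong\mathbb{Z}/2\mathbb{Z}\times\mathbb{Z}/16\mathbb{Z}$ (or another noncyclic split), whose exponent is $16<32$; consequently no point of order $32$ exists. These trace and group-structure computations over $\mathbb{F}_{27}$ are exactly the calculations the paper states are carried out in Sage.

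\textbf{The main obstacle and how to close it.} The genuine work is the finite but nontrivial computation of group structures: Hasse's bound alone does not exclude the order $32$, so one must actually show that the $2$-Sylow subgroup is never cyclic of order $32$ for any of the relevant twists. The cleanest conceptual input is the constraint that when $t=-4$ and $q=27$ the $2$-part of the group order is $2^5$, and one checks via the full point count (or equivalently via the factorization of $\#\widetilde{E}(k)$ together with the structure of $\widetilde{E}(k)[2]$) that the $2$-torsion subgroup $\widetilde{E}(k)[2]$ has order $4$, forcing $(\mathbb{Z}/2\mathbb{Z})^2\subseteq\widetilde{E}(k)$ and hence a noncyclic $2$-Sylow subgroup. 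I would therefore organize the argument as: (i) reduce by Hasse to $|\widetilde{E}(k)|=32$; (ii) list the curves with $t=-4$; (iii) verify $\widetilde{E}(k)[2]\cong(\mathbb{Z}/2\mathbb{Z})^2$ in each case, so the exponent divides $16$; (iv) conclude no element has order $32$. The only delicate point is ensuring the enumeration of isomorphism classes is exhaustive (in particular not omitting $j=0$), and that the chosen nonsquare $\alpha=2$ genuinely produces the quadratic twist; both are handled by the explicit models quoted above.
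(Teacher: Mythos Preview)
Your argument has a fatal gap: the central claim is simply false. There \emph{do} exist elliptic curves over $\mathbb{F}_{27}$ with $\widetilde{E}(\mathbb{F}_{27})\cong\mathbb{Z}/32\mathbb{Z}$. Indeed, the paper's own Table~\ref{table32-27group} exhibits three $j$-invariants (namely $[2,0,0]$, $[2,1,2]$, $[2,2,2]$ in $\mathbb{F}_{27}=\mathbb{F}_3[a]$) for which one of the two twists has group exactly $\mathbb{Z}/32\mathbb{Z}$. So your step~(iii), asserting that $\widetilde{E}(k)[2]\cong(\mathbb{Z}/2\mathbb{Z})^2$ for every curve of order $32$, fails; Hasse's bound plus a $2$-rank computation cannot close the argument.

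The reason is that you have misread the statement. The lemma is not the bare assertion ``no elliptic curve over $\mathbb{F}_{27}$ has a point of order $32$''; rather, $\widetilde{E}$ is the reduction of a curve $E/K$ carrying a $K$-rational point of order $32$ over a cubic field $K$, i.e.\ it arises from a genuine cubic point $x\in Y_1(32)(K)$. This provenance gives extra constraints, and the whole point of Kenku's method is to exploit them. The paper's proof works on $\widetilde{X}_0(32)$: it lists the $\mathbb{F}_{27}$-points of the reduced modular curve (Table~\ref{table32-27point}), identifies which of them could correspond to a curve with a $32$-torsion point (those with the starred $j$-invariants), and then eliminates each such candidate using Lemma~\ref{AtkinLehner} and Lemma~\ref{Frobenius}. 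Concretely, for a surviving point $P$ one requires both that $\omega_{32}(P)$ again have a starred $j$-invariant and that the Frobenius trace $\varphi(P)=P+\phi(P)+\phi^2(P)$ land in the image of $\pi_2$ from $X_\Delta(32)(\mathbb{Q})$; the paper checks case by case (Table~\ref{table32-27involution} and Figures~\ref{graph32mod27a}--\ref{graph32mod27b}) that no point satisfies both. Your proposal bypasses all of this machinery, and since the naive statement you are trying to prove is false, there is no way to repair it without bringing in the modular-curve constraints.
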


\begin{proof}

For the modular curve $X_0(32)$,
Fricke \cite{Fricke} calculated the defining equation and the explicit formula of $j$-invariant and the Atkin-Lehner $\omega_{32}$ on this equation. These data can also be found in Furumoto-Hasegawa \cite{FurumotoHasegawa}. The equation is
\begin{equation}\label{equation32-2} y^2=x^3+6x^2+16x+16
\end{equation}
with the formula of $\omega_{32}$
$$\omega_{32}(P)=-P\pm(0,4)$$
and the formula of $j$-invariant
$$j=\frac{256((x(x+4)/2)^4+8(x(x+4)/2)^3+20(x(x+4)/2)^2+16(x(x+4)/2)+1)^3}{(x(x+4)/2)((x(x+4)/2)+4)((x(x+4)/2)+2)^2}$$
It can be calculated in Sage \cite{Sage} that the set of rational torsion points of this equation is
$$\aligned X_0(32)(\mathbb{Q})_{tor}&=\{(\infty,\infty), (0, 4), (-2, 0), (0, -4)\} &\cong\mathbb{Z}/4\mathbb{Z}\endaligned$$
This is exactly all the rational points of this equation since $X_0(32)(\mathbb{Q})\cong J_0(32)(\mathbb{Q})$ is finite.

The map $\pi:X_1(32)\longrightarrow X_0(32)$ factors through $X_\Delta(32)$, where $\Delta=\{\pm1,\pm7,\pm9,\pm15\}$, with $g(X_\Delta(32))=1$ (See for example \cite{JeonKim07}). The cusps of $X_\Delta(32)$ over the $0$ cusp of $X_0(32)$ are quadratic. This is because the eight cusps of $X_1(32)$ over $0$ are
$$\pm\left(
    \begin{array}{c}
      1 \\
      0 \\
    \end{array}
  \right),\pm\left(
    \begin{array}{c}
      3 \\
      0 \\
    \end{array}
  \right),\pm\left(
    \begin{array}{c}
      5 \\
      0 \\
    \end{array}
  \right),\pm\left(
    \begin{array}{c}
      7 \\
      0 \\
    \end{array}
  \right),\pm\left(
    \begin{array}{c}
      9 \\
      0 \\
    \end{array}
  \right),\pm\left(
    \begin{array}{c}
      11 \\
      0 \\
    \end{array}
  \right),\pm\left(
    \begin{array}{c}
      13 \\
      0 \\
    \end{array}
  \right),\pm\left(
    \begin{array}{c}
      15 \\
      0 \\
    \end{array}
  \right)
$$
They have two orbits under the action of $\Delta$:
$$\aligned&\left\{\pm\left(
    \begin{array}{c}
      1 \\
      0 \\
    \end{array}
  \right),\pm\left(
    \begin{array}{c}
      7 \\
      0 \\
    \end{array}
  \right),\pm\left(
    \begin{array}{c}
      9 \\
      0 \\
    \end{array}
  \right),\pm\left(
    \begin{array}{c}
      15 \\
      0 \\
    \end{array}
  \right)\right\}\\
  &\left\{\pm\left(
    \begin{array}{c}
      3 \\
      0 \\
    \end{array}
  \right),\pm\left(
    \begin{array}{c}
      5 \\
      0 \\
    \end{array}
  \right),\pm\left(
    \begin{array}{c}
      11 \\
      0 \\
    \end{array}
  \right),\pm\left(
    \begin{array}{c}
      13 \\
      0 \\
    \end{array}
  \right)\right\}\endaligned
$$
So $X_\Delta(32)$ has two cusps $\pm$$1\choose0$, $\pm$$3\choose0$ over $0$. They are conjugates since they have the same $y$ in the representation $\pm$$x\choose y$.

Now if $\omega_{32}(P)=-P+(0,4)$, then $(0,4)=\omega_{32}(\infty)=0$, which means $(0,4)$ comes from quadratic cusps of $X_0(32)$ under $\pi_2:X_\Delta(32)\longrightarrow X_0(32)$. If $\omega_{32}(P)=-P-(0,4)=-P+(0,-4)$, then $(0,-4)=\omega_{32}(\infty)=0$, which means $(0,-4)$ comes from quadratic cusps of $X_0(32)$ under $\pi_2:X_\Delta(32)\longrightarrow X_0(32)$.

If $k=\mathbb{F}_3$ or $\mathbb{F}_{3^2}$, then $\widetilde{E}(k)$ can not have a point of order $32$ since $32>(1+\sqrt{3^2})^2$. If $k=\mathbb{F}_{3^3}$, let $[m,n,l]$ denote the element $ma^2+na+l\in\mathbb{F}_{3^3}=\mathbb{F}_3[a]$. Table \ref{table32-27point} list the coordinates and $j$-invariant of the points on the reduction of the equation \ref{equation32-2} modulo 3 which are rational over $\mathbb{F}_{3^3}$. Denote $\varphi=id+\phi+\phi^2$. The last two columns shows $\varphi(X,Y)$.

\begin{table}[!ht]
\tabcolsep 0pt
\vspace*{0pt}
\begin{center}
\def\temptablewidth{0.8\textwidth}
\setlength{\abovecaptionskip}{0pt}
\setlength{\belowcaptionskip}{-5pt}
\caption{Points on $\widetilde{X}_0(32)(\mathbb{F}_{3^3})$}
\label{table32-27point}
{\rule{\temptablewidth}{1pt}}
\begin{tabular*}{\temptablewidth}{@{\extracolsep{\fill}}ccccccccccccc}
$X$&$Y_1$&$Y_2$&$j$&$\varphi(X,Y_1)$&$\varphi(X,Y_2)$\\\hline
$\infty$&$\infty$&$\infty$&$-$\\
$0$&$2$&$1$&$-$\\
$1$&$0$&$0$&$-$\\
$[0,1,0]$&$[2,1,1]$&$[1,2,2]$&$[2,1,2](*)$&$(0,2)$&$(0,1)$\\
$[0,1,1]$&$[2,2,1]$&$[1,1,2]$&$[2,2,2](*)$&$(0,2)$&$(0,1)$\\
$[0,1,2]$&$[2,0,2]$&$[1,0,1]$&$[2,0,0](*)$&$(0,2)$&$(0,1)$\\
$[1,0,1]$&$[1,1,0]$&$[2,2,0]$&$[2,1,2](*)$&$(1,0)$&$(1,0)$\\
$[1,1,2]$&$[1,2,0]$&$[2,1,0]$&$[2,0,0](*)$&$(1,0)$&$(1,0)$\\
$[1,2,2]$&$[1,0,2]$&$[2,0,1]$&$[2,2,2](*)$&$(1,0)$&$(1,0)$\\
$[2,0,0]$&$[1,0,0]$&$[2,0,0]$&$[2,2,1]$&$(\infty,\infty)$&$(\infty,\infty)$\\
$[2,0,2]$&$[0,2,2]$&$[0,1,1]$&$[2,1,1]$&$(0,2)$&$(0,1)$\\
$[2,1,1]$&$[0,2,1]$&$[0,1,2]$&$[2,2,1]$&$(0,2)$&$(0,1)$\\
$[2,1,2]$&$[1,2,1]$&$[2,1,2]$&$[2,0,2]$&$(\infty,\infty)$&$(\infty,\infty)$\\
$[2,2,1]$&$[0,2,0]$&$[0,1,0]$&$[2,0,2]$&$(0,2)$&$(0,1)$\\
$[2,2,2]$&$[1,1,1]$&$[2,2,2]$&$[2,1,1]$&$(\infty,\infty)$&$(\infty,\infty)$\\
\end{tabular*}
{\rule{\temptablewidth}{1pt}}
\end{center}
\end{table}

Table \ref{table32-27group} shows the group structures of elliptic curves over $\mathbb{F}_{3^3}$ with the given $j$-invariants that appear in Table \ref{table32-27point}. It is easy to see that only the point $P$ with $j$-invariants marked by $(*)$ may correspond to an elliptic curve with a point of order $32$.

\begin{table}[!ht]
\tabcolsep 0pt
\vspace*{0pt}
\begin{center}
\def\temptablewidth{1\textwidth}
\setlength{\abovecaptionskip}{0pt}
\setlength{\belowcaptionskip}{-5pt}
\caption{Group structures of elliptic curves over $\mathbb{F}_{3^3}$}
\label{table32-27group}
{\rule{\temptablewidth}{1pt}}
\begin{tabular*}{\temptablewidth}{@{\extracolsep{\fill}}ccccccccccccc}
$j$&$E_1(\mathbb{F}_{3^3})$&$E_2(\mathbb{F}_{3^3})$&$j$&$E_1(\mathbb{F}_{3^3})$&$E_2(\mathbb{F}_{3^3})$\\\hline
$[2,0,0](*)$&$\mathbb{Z}/24$&$\mathbb{Z}/32$&
$[2,0,2]$&$\mathbb{Z}/2\times\mathbb{Z}/12$&$\mathbb{Z}/2\times\mathbb{Z}/16$\\
$[2,1,1]$&$\mathbb{Z}/2\times\mathbb{Z}/12$&$\mathbb{Z}/2\times\mathbb{Z}/16$&
$[2,1,2](*)$&$\mathbb{Z}/24$&$\mathbb{Z}/32$\\
$[2,2,1]$&$\mathbb{Z}/2\times\mathbb{Z}/12$&$\mathbb{Z}/2\times\mathbb{Z}/16$&
$[2,2,2](*)$&$\mathbb{Z}/24$&$\mathbb{Z}/32$\\
\end{tabular*}
{\rule{\temptablewidth}{1pt}}
\end{center}
\end{table}

Denote $\omega(P)=-P+(0,4)\equiv-P+(0,1)\mod3$ and $\omega'(P)=-P-(0,4)=-P+(0,-4)\equiv-P+(0,2)\mod3$. By using the additive law on $X_0(32)$, both $\omega(P)$ and $\omega'(P)$ are calculated as listed in Table \ref{table32-27involution}.

\begin{table}[!ht]
\tabcolsep 0pt
\vspace*{0pt}
\begin{center}
\def\temptablewidth{0.8\textwidth}
\setlength{\abovecaptionskip}{0pt}
\setlength{\belowcaptionskip}{-5pt}
\caption{Atkin-Lehner involution on $\widetilde{X}_0(32)(\mathbb{F}_{3^3})$}
\label{table32-27involution}
{\rule{\temptablewidth}{1pt}}
\begin{tabular*}{\temptablewidth}{@{\extracolsep{\fill}}ccccccccccccc}
$~~~~P~~~~$&$X(P)$&$Y(P)$&$X(\omega(P))$&$Y(\omega(P))$&$X(\omega'(P))$&$Y(\omega'(P))$\\\hline
\texttt{1}&$[0,1,0]$&$[2,1,1]$&$[2,2,2]$&$[2,2,2]$&$[1,0,1]$&$[1,1,0]$\\
\texttt{2}&$[0,1,0]$&$[1,2,2]$&$[1,0,1]$&$[2,2,0]$&$[2,2,2]$&$[1,1,1]$\\
\texttt{3}&$[0,1,1]$&$[2,2,1]$&$[2,0,0]$&$[2,0,0]$&$[1,2,2]$&$[1,0,2]$\\
\texttt{4}&$[0,1,1]$&$[1,1,2]$&$[1,2,2]$&$[2,0,1]$&$[2,0,0]$&$[1,0,0]$\\
\texttt{5}&$[0,1,2]$&$[2,0,2]$&$[2,1,2]$&$[2,1,2]$&$[1,1,2]$&$[1,2,0]$\\
\texttt{6}&$[0,1,2]$&$[1,0,1]$&$[1,1,2]$&$[2,1,0]$&$[2,1,2]$&$[1,2,1]$\\
\texttt{7}&$[1,0,1]$&$[1,1,0]$&$[2,2,1]$&$[0,1,0]$&$[0,1,0]$&$[2,1,1]$\\
\texttt{8}&$[1,0,1]$&$[2,2,0]$&$[0,1,0]$&$[1,2,2]$&$[2,2,1]$&$[0,2,0]$\\
\texttt{9}&$[1,1,2]$&$[1,2,0]$&$[2,1,1]$&$[0,1,2]$&$[0,1,2]$&$[2,0,2]$\\
\texttt{10}&$[1,1,2]$&$[2,1,0]$&$[0,1,2]$&$[1,0,1]$&$[2,1,1]$&$[0,2,1]$\\
\texttt{11}&$[1,2,2]$&$[1,0,2]$&$[2,0,2]$&$[0,1,1]$&$[0,1,1]$&$[2,2,1]$\\
\texttt{12}&$[1,2,2]$&$[2,0,1]$&$[0,1,1]$&$[1,1,2]$&$[2,0,2]$&$[0,2,2]$\\
\texttt{13}&$[2,0,0]$&$[1,0,0]$&$[2,0,2]$&$[0,2,2]$&$[0,1,1]$&$[1,1,2]$\\
\texttt{14}&$[2,0,0]$&$[2,0,0]$&$[0,1,1]$&$[2,2,1]$&$[2,0,2]$&$[0,1,1]$\\
\texttt{15}&$[2,0,2]$&$[0,2,2]$&$[2,0,0]$&$[1,0,0]$&$[1,2,2]$&$[2,0,1]$\\
\texttt{16}&$[2,0,2]$&$[0,1,1]$&$[1,2,2]$&$[1,0,2]$&$[2,0,0]$&$[2,0,0]$\\
\texttt{17}&$[2,1,1]$&$[0,2,1]$&$[2,1,2]$&$[1,2,1]$&$[1,1,2]$&$[2,1,0]$\\
\texttt{18}&$[2,1,1]$&$[0,1,2]$&$[1,1,2]$&$[1,2,0]$&$[2,1,2]$&$[2,1,2]$\\
\texttt{19}&$[2,1,2]$&$[1,2,1]$&$[2,1,1]$&$[0,2,1]$&$[0,1,2]$&$[1,0,1]$\\
\texttt{20}&$[2,1,2]$&$[2,1,2]$&$[0,1,2]$&$[2,0,2]$&$[2,1,1]$&$[0,1,2]$\\
\texttt{21}&$[2,2,1]$&$[0,2,0]$&$[2,2,2]$&$[1,1,1]$&$[1,0,1]$&$[2,2,0]$\\
\texttt{22}&$[2,2,1]$&$[0,1,0]$&$[1,0,1]$&$[1,1,0]$&$[2,2,2]$&$[2,2,2]$\\
\texttt{23}&$[2,2,2]$&$[1,1,1]$&$[2,2,1]$&$[0,2,0]$&$[0,1,0]$&$[1,2,2]$\\
\texttt{24}&$[2,2,2]$&$[2,2,2]$&$[0,1,0]$&$[2,1,1]$&$[2,2,1]$&$[0,1,0]$\\

\end{tabular*}
{\rule{\temptablewidth}{1pt}}
\end{center}
\end{table}

Consider the points $P=(X,Y)$ numbered by $\texttt{1},\cdots,\texttt{12}$ with $j(P)$ among those marked by $(*)$. Now, if $\omega_{32}=\omega$, then only the even numbered points \texttt{2},\texttt{4},\texttt{6},\texttt{8},\texttt{10},\texttt{12} satisfy that $j(\omega_{32}(P))$ is among those marked by $(*)$ in Table \ref{table32-27group}, which is required by Lemma \ref{AtkinLehner}. But they have $$\aligned\varphi(P)&=(0,1)&(\texttt{2},\texttt{4},\texttt{6})\\\varphi(\omega_{32}(P))&=(0,1)&~~~~~~~(\texttt{8},\texttt{10},\texttt{12})\endaligned$$ which is not allowed by Lemma \ref{Frobenius}.

Otherwise, if $\omega_{32}=\omega'$, then only the odd numbered points \texttt{1},\texttt{3},\texttt{5},\texttt{7},\texttt{9},\texttt{11} satisfy that $j(\omega_{32}(P))$ is among those marked by $(*)$ in Table \ref{table32-27group}, which is required by Lemma \ref{AtkinLehner}. But they have $$\aligned\varphi(P)&=(0,2)&(\texttt{1},\texttt{3},\texttt{5})\\\varphi(\omega_{32}(P))&=(0,2)&~~~~~~~(\texttt{7},\texttt{9},\texttt{11})\endaligned$$ which is not allowed  by Lemma \ref{Frobenius}.

Actually, the information in Table \ref{table32-27involution} can be represented in Figure \ref{graph32mod27a} and Figure \ref{graph32mod27b}. The black dot means either the point has unmarked $j$-invariant or the point has $\varphi(P)$ not allowed by Lemma \ref{Frobenius}. The white dot point is satisfactory, but each of them is connected to a black dot point by $\omega_{32}$, which is not allowed by Lemma \ref{AtkinLehner} or Lemma \ref{Frobenius}.

\begin{figure}
\caption{Atkin-Lehner involution on $\widetilde{X}_0(32)(\mathbb{F}_{3^3})$, $\omega_{32}=\omega$}
\[\begin{tikzpicture}\label{graph32mod27a}
	\vertex (1) at (3*0.9914,3*0.1305) [label=right:$\texttt{1}$] {};
	\vertex[fill] (2) at (3*0.9239,3*0.3827) [label=right:$\texttt{2}$] {};
	\vertex (3) at (3*0.7934,3*0.6088) [label=right:$\texttt{3}$] {};
	\vertex[fill] (4) at (3*0.6088,3*0.7934) [label=right:$\texttt{4}$] {};
	\vertex (5) at (3*0.3827,3*0.9239) [label=right:$\texttt{5}$] {};
	\vertex[fill] (6) at (3*0.1305,3*0.9914) [label=right:$\texttt{6}$] {};
	\vertex (7) at (-3*0.1305,3*0.9914) [label=left:$\texttt{7}$] {};
	\vertex (8) at (-3*0.3827,3*0.9239) [label=left:$\texttt{8}$] {};
	\vertex (9) at (-3*0.6088,3*0.7934) [label=left:$\texttt{9}$] {};
    \vertex (10) at (-3*0.7934,3*0.6088) [label=left:$\texttt{10}$] {};
	\vertex (11) at (-3*0.9239,3*0.3827) [label=left:$\texttt{11}$] {};
	\vertex (12) at (-3*0.9914,3*0.1305) [label=left:$\texttt{12}$] {};
    \vertex[fill] (13) at (-3*0.9914,-3*0.1305) [label=left:$\texttt{13}$] {};
	\vertex[fill] (14) at (-3*0.9239,-3*0.3827) [label=left:$\texttt{14}$] {};
	\vertex[fill] (15) at (-3*0.7934,-3*0.6088) [label=left:$\texttt{15}$] {};
    \vertex[fill] (16) at (-3*0.6088,-3*0.7934) [label=left:$\texttt{16}$] {};
	\vertex[fill] (17) at (-3*0.3827,-3*0.9239) [label=left:$\texttt{17}$] {};
	\vertex[fill] (18) at (-3*0.1305,-3*0.9914) [label=left:$\texttt{18}$] {};
    \vertex[fill] (19) at (3*0.1305,-3*0.9914) [label=right:$\texttt{19}$] {};
	\vertex[fill] (20) at (3*0.3827,-3*0.9239) [label=right:$\texttt{20}$] {};
	\vertex[fill] (21) at (3*0.6088,-3*0.7934) [label=right:$\texttt{21}$] {};
	\vertex[fill] (22) at (3*0.7934,-3*0.6088) [label=right:$\texttt{22}$] {};
	\vertex[fill] (23) at (3*0.9239,-3*0.3827) [label=right:$\texttt{23}$] {};
	\vertex[fill] (24) at (3*0.9914,-3*0.1305) [label=right:$\texttt{24}$] {};

    	\path
		(1) edge (24)
        (2) edge (8)
        (3) edge (14)
        (4) edge (12)
        (5) edge (20)
        (6) edge (10)
        (7) edge (22)
        (8) edge (2)
        (9) edge (18)
        (10) edge (6)
        (11) edge (16)
        (12) edge (4)
        (13) edge (15)
        (14) edge (3)
        (15) edge (13)
        (16) edge (11)
        (17) edge (19)
        (18) edge (9)
        (19) edge (17)
        (20) edge (5)
        (21) edge (23)
        (22) edge (7)
        (23) edge (21)
        (24) edge (1)

	;
\end{tikzpicture}\]
\end{figure}

\begin{figure}
\caption{Atkin-Lehner involution on $\widetilde{X}_0(32)(\mathbb{F}_{3^3})$, $\omega_{32}=\omega'$}
\[\begin{tikzpicture}\label{graph32mod27b}
	\vertex[fill] (1) at (3*0.9914,3*0.1305) [label=right:$\texttt{1}$] {};
	\vertex (2) at (3*0.9239,3*0.3827) [label=right:$\texttt{2}$] {};
	\vertex[fill] (3) at (3*0.7934,3*0.6088) [label=right:$\texttt{3}$] {};
	\vertex (4) at (3*0.6088,3*0.7934) [label=right:$\texttt{4}$] {};
	\vertex[fill] (5) at (3*0.3827,3*0.9239) [label=right:$\texttt{5}$] {};
	\vertex (6) at (3*0.1305,3*0.9914) [label=right:$\texttt{6}$] {};
	\vertex (7) at (-3*0.1305,3*0.9914) [label=left:$\texttt{7}$] {};
	\vertex (8) at (-3*0.3827,3*0.9239) [label=left:$\texttt{8}$] {};
	\vertex (9) at (-3*0.6088,3*0.7934) [label=left:$\texttt{9}$] {};
    \vertex (10) at (-3*0.7934,3*0.6088) [label=left:$\texttt{10}$] {};
	\vertex (11) at (-3*0.9239,3*0.3827) [label=left:$\texttt{11}$] {};
	\vertex (12) at (-3*0.9914,3*0.1305) [label=left:$\texttt{12}$] {};
    \vertex[fill] (13) at (-3*0.9914,-3*0.1305) [label=left:$\texttt{13}$] {};
	\vertex[fill] (14) at (-3*0.9239,-3*0.3827) [label=left:$\texttt{14}$] {};
	\vertex[fill] (15) at (-3*0.7934,-3*0.6088) [label=left:$\texttt{15}$] {};
    \vertex[fill] (16) at (-3*0.6088,-3*0.7934) [label=left:$\texttt{16}$] {};
	\vertex[fill] (17) at (-3*0.3827,-3*0.9239) [label=left:$\texttt{17}$] {};
	\vertex[fill] (18) at (-3*0.1305,-3*0.9914) [label=left:$\texttt{18}$] {};
    \vertex[fill] (19) at (3*0.1305,-3*0.9914) [label=right:$\texttt{19}$] {};
	\vertex[fill] (20) at (3*0.3827,-3*0.9239) [label=right:$\texttt{20}$] {};
	\vertex[fill] (21) at (3*0.6088,-3*0.7934) [label=right:$\texttt{21}$] {};
	\vertex[fill] (22) at (3*0.7934,-3*0.6088) [label=right:$\texttt{22}$] {};
	\vertex[fill] (23) at (3*0.9239,-3*0.3827) [label=right:$\texttt{23}$] {};
	\vertex[fill] (24) at (3*0.9914,-3*0.1305) [label=right:$\texttt{24}$] {};

    	\path
		(1) edge (7)
        (2) edge (23)
        (3) edge (11)
        (4) edge (13)
        (5) edge (9)
        (6) edge (19)
        (7) edge (1)
        (8) edge (21)
        (9) edge (5)
        (10) edge (17)
        (11) edge (3)
        (12) edge (15)
        (13) edge (4)
        (14) edge (16)
        (15) edge (12)
        (16) edge (14)
        (17) edge (10)
        (18) edge (20)
        (19) edge (6)
        (20) edge (18)
        (21) edge (8)
        (22) edge (24)
        (23) edge (2)
        (24) edge (22)

	;
\end{tikzpicture}\]
\end{figure}

\end{proof}

\subsection{$N=24$}

As is shown in \cite{Wang}, $J_1(N)(\mathbb{Q})$ is finite. From Jeon-Kim-Schweizer \cite{JeonKimSchweizer}, we know $Gon(X_1(N))>3$. Let $K$ be a cubic field and $\wp$ a prime of $K$ over $5$. We can always choose $\wp$ such that the residue field $k=\mathcal{O}_K/\wp$ has degree $1$ or $3$ over $\mathbb{F}_p$. In fact, the decomposition of $p$ in $\mathcal{O}_K$ has the following five types
$$\aligned&I: &p\mathcal{O}_K&=\wp&&II: &p\mathcal{O}_K&=\wp^3& &III: &p\mathcal{O}_K&=\wp_1\wp_2&\\ &IV: &p\mathcal{O}_K&=\wp_1\wp_2^2&
 &V: &p\mathcal{O}_K&=\wp_1\wp_2\wp_3&\endaligned$$
In type $II,IV,V$, all the primes over $p$ have degree $1$ residue field. In type $I$, the prime over $p$ has degree $3$ residue field. In type $III$, the degree of the residue fields of the two primes $\wp_1,\wp_2$ is $1$ and $2$ respectively. We choose the one with degree $1$ residue field as $\wp$.

Suppose $x=(E,\pm P)\in Y_1(N)(K)$. Therefore by Lemma \ref{reductionlemma}, $E$ has good reduction at $\wp$. By Proposition \ref{torsionreduction}, the reduction $\widetilde{P}$ of $P$ is a $k$-rational point of order $N$ in the elliptic curve $\widetilde{E}$ over $k=\mathcal{O}_K/\wp$.

The following lemma shows that $\widetilde{E}(k)$ can not have a point of order $24$. This contradiction implies that $\mathbb{Z}/32\mathbb{Z}$ is not a subgroup of $E(K)_{tor}$. The calculations on the finite field $\mathbb{F}_{5^3}$ in this Lemma is done in Sage \cite{Sage}.

\begin{lem}
$\widetilde{E}(k)$ can not have a point of order $24$.
\end{lem}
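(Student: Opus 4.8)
The plan is to transcribe the $N=32$ argument with the prime $3$ replaced by $5$ (the smallest prime not dividing $24$) and $X_0(32)$ replaced by $X_0(24)$. First I would dispose of the residue field $k=\mathbb{F}_5$: Hasse's theorem gives $|\widetilde{E}(\mathbb{F}_5)|\leq(1+\sqrt5)^2<24$, so no point of order $24$ can occur there, and by the choice of $\wp$ the only surviving case is $k=\mathbb{F}_{5^3}$, where $(1+\sqrt{5^3})^2>24$ leaves the question open. The rest of the proof takes place over $\mathbb{F}_{5^3}$, where Case~1 of the preliminaries ($p\neq2,3$, $\alpha=2$) provides the two isomorphism classes of elliptic curves attached to each admissible $j$-invariant (treating any occurrence of $j=0,1728$ separately if needed).

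Since $g(X_0(24))=1$, I would next take from Fricke/Furumoto--Hasegawa a Weierstrass model for the elliptic curve $X_0(24)$ together with its explicit $j$-invariant formula and the formulas for the Atkin--Lehner involutions. Here, unlike the case $N=32$, the coprime factorizations of $24$ produce three nontrivial involutions $\omega_3,\omega_8,\omega_{24}$; each is available in Lemma~\ref{AtkinLehner}, and under the \texttt{PRETEND} hypothesis (which makes $\omega_{N'}(x)$ cubic for every $N'$ with $(N',24/N')=1$) each is also available in Lemma~\ref{Frobenius}. Because $J_0(24)(\mathbb{Q})$ is finite, $X_0(24)(\mathbb{Q})=X_0(24)(\mathbb{Q})_{tor}$ is a short finite list I would compute in Sage. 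I would then fix an intermediate curve $X_\Delta(24)$ of genus $1$ (for a suitable $\Delta\subset(\mathbb{Z}/24\mathbb{Z})^\times$ containing $\pm1$) and, exactly as in the zero-cusp computation for $N=32$, decompose the cusps of $X_1(24)$ into $\Delta$-orbits to decide which cusps of $X_\Delta(24)$ are rational and which are quadratic or cubic. This identifies precisely the points of $\widetilde{X}_0(24)(\mathbb{F}_{5^3})$ that equal $\pi_2(z)\otimes\overline{\mathbb{F}}_5$ for a \emph{rational} $z\in X_\Delta(24)$, which are the only values Lemma~\ref{Frobenius} permits for $\varphi(P):=(\id+\phi+\phi^2)(P)$.

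With these data I would assemble the same three tables as for $N=32$: first, all $\mathbb{F}_{5^3}$-points of the reduced model together with their $j$-invariants and the value $\varphi(X,Y)$; second, the group structures over $\mathbb{F}_{5^3}$ of the two curves attached to each occurring $j$, marking the $j$ that admit a point of order $24$; and third, the images of each point under $\omega_3,\omega_8,\omega_{24}$. A reduction $\widetilde{P}=\pi(x)\otimes\overline{\mathbb{F}}_5$ coming from a genuine point of order $24$ would have to satisfy simultaneously: (a) $j(\widetilde{P})$ is marked; (b) $\varphi(\widetilde{P})$ lies in the admissible rational-image set; and, for each involution $\omega_{N'}$, (c) $j(\omega_{N'}(\widetilde{P}))$ is marked (Lemma~\ref{AtkinLehner}) and (d) $\varphi(\omega_{N'}(\widetilde{P}))$ is again admissible (Lemma~\ref{Frobenius}). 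Reading off the tables, I would check that no point of $\widetilde{X}_0(24)(\mathbb{F}_{5^3})$ meets all of (a)--(d); hence $\widetilde{E}(k)$ can carry no point of order $24$, proving the lemma.

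I expect the main obstacle to be the determination of the admissible set for $\varphi$, namely deciding via the $\Delta$-orbit decomposition of the cusps of $X_1(24)$ which $\mathbb{F}_{5^3}$-points of $\widetilde{X}_0(24)$ are reductions of $\pi_2$ of honestly rational points of $X_\Delta(24)$ rather than of quadratic or cubic cusps; this is exactly the hinge of the $N=32$ argument, and with three Atkin--Lehner involutions to track the combinatorial bookkeeping is heavier. The conceptual content is identical to $N=32$, and all finite-field computations over $\mathbb{F}_{5^3}$ would be carried out in Sage.
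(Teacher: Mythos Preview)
Your plan matches the paper's proof almost exactly: same prime $p=5$, same Hasse bound for $k=\mathbb{F}_5$, same Fricke model of $X_0(24)$, same intermediate curve $X_\Delta(24)$ with $\Delta=\{\pm1,\pm11\}$ of genus~$1$, and the same orbit analysis of cusps to pin down the forbidden values of $\varphi$. Two small points are worth flagging. First, in the paper the $j$-invariant filter (your conditions (a) and (c), i.e.\ Lemma~\ref{AtkinLehner}) turns out to be \emph{vacuous}: every $j$ occurring in $\widetilde{X}_0(24)(\mathbb{F}_{5^3})$ already admits a point of order $24$, so the entire contradiction is carried by Lemma~\ref{Frobenius} alone (your (b) and (d)), using all three involutions $\omega_3,\omega_8,\omega_{24}$ simultaneously. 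Second, and relatedly, the cusp analysis must be done not only over the $0$-cusp as for $N=32$, but also over the cusps at $d=3$ and $d=8$ (since $\omega_3(\infty)$, $\omega_8(\infty)$, $\omega_{24}(\infty)$ hit three distinct cusps of $X_0(24)$); the paper finds that all three have quadratic preimages on $X_\Delta(24)$, so the forbidden set for $\varphi$ is $\{(0,6),(-3,0),(-4,2)\}$, and every point either lands in this set or is sent there by some $\omega_{N'}$.
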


\begin{proof}

For the modular curve $X_0(24)$,
Fricke \cite{Fricke} calculated the defining equation and the explicit formula of $j$-invariant and the Atkin-Lehner involutions $\omega_4,\omega_9,\omega_{36}$ on this equation. These data can also be found in Furumoto-Hasegawa \cite{FurumotoHasegawa}. The equation is
\begin{equation}\label{equation24-2} y^2=x^3+11x^2+36x+36
\end{equation}
with the formula of the Atkin-Lehner involutions
$$\aligned\omega_{24}(P)&=-P+(0,6)\\\omega_3(P)&=P+(-3,0)\\\omega_8(P)&=\omega_{24}\circ\omega_3^{-1}(P)=-P+(-4,2)\endaligned$$
and the formula of $j$-invariant
$$j=\frac{27\left(\frac{(x(x+6)/2)(2(x(x+6)/2)+9)^2}{27((x(x+6)/2)+4)}+1\right)\left(9\frac{(x(x+6)/2)(2(x(x+6)/2)+9)^2}{27((x(x+6)/2)+4)}+1\right)^3}{\frac{(x(x+6)/2)(2(x(x+6)/2)+9)^2}{27((x(x+6)/2)+4)}}$$
It can be calculated in Sage \cite{Sage} that the set of rational torsion points of this equation is
$$\aligned X_0(24)(\mathbb{Q})&=\{(\infty,\infty), (-4, 2), (-2, 0), (-4, -2), (-3,0),(0,6),(-6,0),(0,-6)\}\\&\cong\mathbb{Z}/2\mathbb{Z}\times\mathbb{Z}/4\mathbb{Z}\endaligned$$
This is exactly all the rational points of this equation since $X_0(24)(\mathbb{Q})\cong J_0(24)(\mathbb{Q})$ is finite.

The map $\pi:X_1(24)\longrightarrow X_0(24)$ factors through $X_\Delta(24)$, with $\Delta=\{\pm1,\pm11\}$, $g(X_\Delta(24))=1$ (See for example \cite{JeonKim07}). The cusps of $X_\Delta(24)$ over the $0$ cusp of $X_0(24)$ are quadratic. This is because the four cusps of $X_1(24)$ over $0$ are
$$\pm\left(
    \begin{array}{c}
      1 \\
      0 \\
    \end{array}
  \right),\pm\left(
    \begin{array}{c}
      5 \\
      0 \\
    \end{array}
  \right),\pm\left(
    \begin{array}{c}
      7 \\
      0 \\
    \end{array}
  \right),\pm\left(
    \begin{array}{c}
      11 \\
      0 \\
    \end{array}
  \right)
$$
They have two orbits under the action of $\Delta$:
$$\left\{\pm\left(
    \begin{array}{c}
      1 \\
      0 \\
    \end{array}
  \right),\pm\left(
    \begin{array}{c}
      11 \\
      0 \\
    \end{array}
  \right)\right\},~~~~~
  \left\{\pm\left(
    \begin{array}{c}
      5 \\
      0 \\
    \end{array}
  \right),\pm\left(
    \begin{array}{c}
      7 \\
      0 \\
    \end{array}
  \right)\right\}
$$
So $X_\Delta(24)$ has two cusps $\pm$$1\choose0$, $\pm$$5\choose0$ over $0$. They are conjugates since they have the same $y$ in the representation $\pm$$x\choose y$.

The cusps of $X_\Delta(24)$ over the cusp $\pm$$1\choose3$ at $d=3$ of $X_0(24)$ are quadratic. This is because the four cusps of $X_1(24)$ over $\pm$$1\choose3$ are
$$\pm\left(
    \begin{array}{c}
      1 \\
      3 \\
    \end{array}
  \right),\pm\left(
    \begin{array}{c}
      2 \\
      3 \\
    \end{array}
  \right),\pm\left(
    \begin{array}{c}
      1 \\
      9 \\
    \end{array}
  \right),\pm\left(
    \begin{array}{c}
      2 \\
      9 \\
    \end{array}
  \right)
$$
They have two orbits under the action of $\Delta$:
$$\left\{\pm\left(
    \begin{array}{c}
      1 \\
      3 \\
    \end{array}
  \right),\pm\left(
    \begin{array}{c}
      2 \\
      9 \\
    \end{array}
  \right)\right\},~~~~~
  \left\{\pm\left(
    \begin{array}{c}
      2 \\
      3 \\
    \end{array}
  \right),\pm\left(
    \begin{array}{c}
      1 \\
      9 \\
    \end{array}
  \right)\right\}
$$
So $X_\Delta(24)$ has two cusps $\pm$$1\choose3$, $\pm$$2\choose3$ over $\pm$$1\choose3$. They are conjugates since they have the same $y$ in the representation $\pm$$x\choose y$.

The cusps of $X_\Delta(24)$ over the cusp $\pm$$1\choose8$ at $d=8$ of $X_0(24)$ are quadratic. This is because the four cusps of $X_1(24)$ over $\pm$$1\choose8$ are
$$\pm\left(
    \begin{array}{c}
      1 \\
      8 \\
    \end{array}
  \right),\pm\left(
    \begin{array}{c}
      3 \\
      8 \\
    \end{array}
  \right),\pm\left(
    \begin{array}{c}
      5 \\
      8 \\
    \end{array}
  \right),\pm\left(
    \begin{array}{c}
      7 \\
      8 \\
    \end{array}
  \right)
$$
They have two orbits under the action of $\Delta$:
$$\left\{\pm\left(
    \begin{array}{c}
      1 \\
      8 \\
    \end{array}
  \right),\pm\left(
    \begin{array}{c}
      5 \\
      8 \\
    \end{array}
  \right)\right\},~~~~~
  \left\{\pm\left(
    \begin{array}{c}
      3 \\
      8 \\
    \end{array}
  \right),\pm\left(
    \begin{array}{c}
      7 \\
      8 \\
    \end{array}
  \right)\right\}
$$
So $X_\Delta(24)$ has two cusps $\pm$$1\choose8$, $\pm$$3\choose8$ over $\pm$$1\choose8$. They are conjugates since they have the same $y$ in the representation $\pm$$x\choose y$.

Now  $(0,6)=\omega_{24}(\infty)=0$, $(-3,0)=\omega_3(\infty)=\pm$$1\choose3$, $(-4,2)=\omega_8(\infty)=\pm$$1\choose8$ which means $(0,6), (-3,0)$ and $(-4,2)$ all come from quadratic cusps of $X_0(24)$ under $\pi_2:X_\Delta(24)\longrightarrow X_0(24)$.

If $k=\mathbb{F}_5$, then $\widetilde{E}(k)$ can not have a point of order $24$ since $24>(1+\sqrt{5})^2$. If $k=\mathbb{F}_{5^3}$, let $[m,n,l]$ denote the element $ma^2+na+l\in\mathbb{F}_{5^3}=\mathbb{F}_5[a]$. Table \ref{table24-125point} list the coordinates, $j$-invariant and $\varphi$ of the  points on the reduction of equation \ref{equation24-2} modulo 5 which are rational over $\mathbb{F}_{5^3}$.

\begin{center}
\begin{longtable}{cccccccccccc}

\caption{Points on $\widetilde{X}_0(24)(\mathbb{F}_{5^3})$} \label{table24-125point} \\
\hline\hline \multicolumn{1}{c}{$X$} & \multicolumn{1}{c}{$Y_1$} & \multicolumn{1}{c}{$Y_2$} & \multicolumn{1}{c}{$j$}&\multicolumn{1}{c}{$\varphi(X,Y_1)$}&\multicolumn{1}{c}{$\varphi(X,Y_2)$}\\ \hline
\endfirsthead

\multicolumn{5}{c}%
{{ \tablename\ \thetable{}: continued}} \\
\hline \multicolumn{1}{c}{$X$} & \multicolumn{1}{c}{$Y_1$} & \multicolumn{1}{c}{$Y_2$} & \multicolumn{1}{c}{$j$}&\multicolumn{1}{c}{$\varphi(X,Y_1)$}&\multicolumn{1}{c}{$\varphi(X,Y_2)$}\\ \hline
\endhead

\hline
\endfoot

\hline\hline
\endlastfoot

$\infty$&$\infty$&$\infty$&$-$\\
$0$&$4$&$1$&$-$\\
$1$&$3$&$2$&$-$\\
$2$&$0$&$0$&$-$\\
$3$&$0$&$0$&$-$\\
$4$&$0$&$0$&$-$\\
$[0,1,1]$&$[3,0,1]$&$[2,0,4]$&$[4,4,4]$&$(0,1)$&$(0,4)$\\
$[0,1,3]$&$[4,1,4]$&$[1,4,1]$&$[4,3,0]$&$(1,2)$&$(1,3)$\\
$[0,2,1]$&$[3,2,4]$&$[2,3,1]$&$[1,0,4]$&$(4,0)$&$(4,0)$\\
$[0,3,0]$&$[4,1,2]$&$[1,4,3]$&$[4,1,0]$&$(2,0)$&$(2,0)$\\
$[0,4,0]$&$[1,0,2]$&$[4,0,3]$&$[2,1,3]$&$(1,2)$&$(1,3)$\\
$[0,4,3]$&$[2,3,2]$&$[3,2,3]$&$[4,4,4]$&$(0,1)$&$(0,4)$\\
$[1,0,3]$&$[3,4,4]$&$[2,1,1]$&$[4,3,0]$&$(4,0)$&$(4,0)$\\
$[1,1,1]$&$[1,3,4]$&$[4,2,1]$&$[3,4,2]$&$(1,2)$&$(1,3)$\\
$[1,1,2]$&$[4,0,4]$&$[1,0,1]$&$[3,4,4]$&$(4,0)$&$(4,0)$\\
$[1,1,3]$&$[2,4,4]$&$[3,1,1]$&$[4,4,2]$&$(0,1)$&$(0,4)$\\
$[1,2,2]$&$[2,2,3]$&$[3,3,2]$&$[2,2,1]$&$(2,0)$&$(2,0)$\\
$[1,3,3]$&$[3,4,1]$&$[2,1,4]$&$[4,4,2]$&$(\infty,\infty)$&$(\infty,\infty)$\\
$[1,3,4]$&$[0,1,0]$&$[0,4,0]$&$[2,1,2]$&$(3,0)$&$(3,0)$\\
$[1,4,3]$&$[3,2,1]$&$[2,3,4]$&$[3,4,0]$&$(\infty,\infty)$&$(\infty,\infty)$\\
$[1,4,4]$&$[2,4,4]$&$[3,1,1]$&$[2,3,2]$&$(3,0)$&$(3,0)$\\
$[2,0,2]$&$[2,1,2]$&$[3,4,3]$&$[3,2,2]$&$(0,1)$&$(0,4)$\\
$[2,0,4]$&$[4,3,3]$&$[1,2,2]$&$[1,1,1]$&$(1,2)$&$(1,3)$\\
$[2,1,0]$&$[3,0,1]$&$[2,0,4]$&$[3,4,0]$&$(0,1)$&$(0,4)$\\
$[2,1,3]$&$[1,4,4]$&$[4,1,1]$&$[3,2,2]$&$(1,2)$&$(1,3)$\\
$[2,1,4]$&$[0,3,1]$&$[0,2,4]$&$[3,2,4]$&$(4,0)$&$(4,0)$\\
$[2,2,3]$&$[1,0,2]$&$[4,0,3]$&$[4,1,0]$&$(3,0)$&$(3,0)$\\
$[2,2,4]$&$[3,3,1]$&$[2,2,4]$&$[4,4,1]$&$(\infty,\infty)$&$(\infty,\infty)$\\
$[2,3,0]$&$[0,1,0]$&$[0,4,0]$&$[3,2,0]$&$(0,1)$&$(0,4)$\\
$[2,3,3]$&$[3,3,3]$&$[2,2,2]$&$[4,4,4]$&$(1,2)$&$(1,3)$\\
$[2,3,4]$&$[1,2,3]$&$[4,3,2]$&$[4,4,1]$&$(4,0)$&$(4,0)$\\
$[2,4,0]$&$[0,1,0]$&$[0,4,0]$&$[3,4,2]$&$(0,1)$&$(0,4)$\\
$[2,4,2]$&$[2,2,0]$&$[3,3,0]$&$[1,0,4]$&$(1,2)$&$(1,3)$\\
$[3,0,2]$&$[2,4,4]$&$[3,1,1]$&$[3,2,2]$&$(0,1)$&$(0,4)$\\
$[3,0,4]$&$[4,2,4]$&$[1,3,1]$&$[0,2,2]$&$(1,2)$&$(1,3)$\\
$[3,1,1]$&$[0,2,0]$&$[0,3,0]$&$[2,3,3]$&$(1,2)$&$(1,3)$\\
$[3,1,4]$&$[1,1,0]$&$[4,4,0]$&$[3,4,2]$&$(0,1)$&$(0,4)$\\
$[3,2,1]$&$[0,2,0]$&$[0,3,0]$&$[4,4,4]$&$(1,2)$&$(1,3)$\\
$[3,2,2]$&$[2,4,1]$&$[3,1,4]$&$[1,1,1]$&$(2,0)$&$(2,0)$\\
$[3,2,3]$&$[4,4,4]$&$[1,1,1]$&$[2,1,2]$&$(0,1)$&$(0,4)$\\
$[3,3,2]$&$[4,4,3]$&$[1,1,2]$&$[3,2,0]$&$(\infty,\infty)$&$(\infty,\infty)$\\
$[3,3,3]$&$[3,0,1]$&$[2,0,4]$&$[1,1,0]$&$(3,0)$&$(3,0)$\\
$[3,4,1]$&$[1,0,2]$&$[4,0,3]$&$[3,2,2]$&$(1,2)$&$(1,3)$\\
$[3,4,2]$&$[0,1,2]$&$[0,4,3]$&$[2,3,3]$&$(2,0)$&$(2,0)$\\
$[3,4,3]$&$[3,2,2]$&$[2,3,3]$&$[1,1,0]$&$(0,1)$&$(0,4)$\\
$[4,0,3]$&$[4,2,2]$&$[1,3,3]$&$[4,2,0]$&$(2,0)$&$(2,0)$\\
$[4,1,2]$&$[4,3,3]$&$[1,2,2]$&$[4,2,0]$&$(3,0)$&$(3,0)$\\
$[4,1,3]$&$[1,4,2]$&$[4,1,3]$&$[3,2,4]$&$(\infty,\infty)$&$(\infty,\infty)$\\
$[4,2,2]$&$[0,2,0]$&$[0,3,0]$&$[2,2,1]$&$(3,0)$&$(3,0)$\\
$[4,2,3]$&$[1,3,2]$&$[4,2,3]$&$[3,4,4]$&$(\infty,\infty)$&$(\infty,\infty)$\\
$[4,3,4]$&$[4,4,1]$&$[1,1,4]$&$[0,2,2]$&$(4,0)$&$(4,0)$\\
$[4,4,0]$&$[3,4,2]$&$[2,1,3]$&$[2,3,2]$&$(0,1)$&$(0,4)$\\
$[4,4,3]$&$[4,3,3]$&$[1,2,2]$&$[3,4,2]$&$(1,2)$&$(1,3)$\\
$[4,4,4]$&$[3,0,3]$&$[2,0,2]$&$[2,1,3]$&$(2,0)$&$(2,0)$\\

\end{longtable}

\end{center}

\begin{table}[!ht]
\tabcolsep 0pt
\vspace*{0pt}
\begin{center}
\def\temptablewidth{1\textwidth}
\setlength{\abovecaptionskip}{0pt}
\setlength{\belowcaptionskip}{-5pt}
\caption{Group structures of the elliptic curves over $\mathbb{F}_{5^3}$}
\label{table24-125group}
{\rule{\temptablewidth}{1pt}}
\begin{tabular*}{\temptablewidth}{@{\extracolsep{\fill}}ccccccccccccc}
$j$&$E_1(\mathbb{F}_{5^3})$&$E_2(\mathbb{F}_{5^3})$&$j$&$E_1(\mathbb{F}_{5^3})$&$E_2(\mathbb{F}_{5^3})$\\\hline
$[0,2,2]$&$\mathbb{Z}/120$&$\mathbb{Z}/132$&
$[1,0,4]$&$\mathbb{Z}/120$&$\mathbb{Z}/132$\\
$[1,1,0]$&$\mathbb{Z}/132$&$\mathbb{Z}/120$&
$[1,1,1]$&$\mathbb{Z}/120$&$\mathbb{Z}/132$\\
$[2,1,2]$&$\mathbb{Z}/132$&$\mathbb{Z}/120$&
$[2,1,3]$&$\mathbb{Z}/120$&$\mathbb{Z}/132$\\
$[2,2,1]$&$\mathbb{Z}/144$&$\mathbb{Z}/108$&
$[2,3,2]$&$\mathbb{Z}/132$&$\mathbb{Z}/120$\\
$[2,3,3]$&$\mathbb{Z}/120$&$\mathbb{Z}/132$&
$[3,2,0]$&$\mathbb{Z}/120$&$\mathbb{Z}/132$\\
$[3,2,2]$&$\mathbb{Z}/2\times\mathbb{Z}/72$&$\mathbb{Z}/2\times\mathbb{Z}/54$&
$[3,2,4]$&$\mathbb{Z}/108$&$\mathbb{Z}/144$\\
$[3,4,0]$&$\mathbb{Z}/120$&$\mathbb{Z}/132$&
$[3,4,2]$&$\mathbb{Z}/2\times\mathbb{Z}/72$&$\mathbb{Z}/2\times\mathbb{Z}/54$\\
$[3,4,4]$&$\mathbb{Z}/108$&$\mathbb{Z}/144$&
$[4,1,0]$&$\mathbb{Z}/144$&$\mathbb{Z}/108$\\
$[4,2,0]$&$\mathbb{Z}/144$&$\mathbb{Z}/108$&
$[4,3,0]$&$\mathbb{Z}/120$&$\mathbb{Z}/132$\\
$[4,4,1]$&$\mathbb{Z}/108$&$\mathbb{Z}/144$&
$[4,4,2]$&$\mathbb{Z}/120$&$\mathbb{Z}/132$\\
$[4,4,4]$&$\mathbb{Z}/2\times\mathbb{Z}/72$&$\mathbb{Z}/2\times\mathbb{Z}/54$\\
\end{tabular*}
{\rule{\temptablewidth}{1pt}}
\end{center}
\end{table}

Table \ref{table24-125group} shows the group structures of elliptic curves over $\mathbb{F}_{5^3}$ with the given $j$-invariants that appear in Table \ref{table24-125point}. It can be seen that all noncuspidal pionts on $\widetilde{X}_0(24)(\mathbb{F}_{5^3})$ may correspond to an elliptic curve with a point of order $24$. By using the additive law on $X_0(24)$, $\omega_3(P),\omega_8(P)$ and $\omega_{24}(P)$ are calculated as listed in Table \ref{table24-125involution}.

\begin{center}
\begin{longtable}{cccccccccccc}

\caption{Atkin-Lehner involutions on $\widetilde{X}_0(24)(\mathbb{F}_{5^3})$} \label{table24-125involution} \\
\hline\hline \multicolumn{1}{c}{$P$} & \multicolumn{1}{c}{$X(P)$} & \multicolumn{1}{c}{$Y(P)$} & \multicolumn{1}{c}{$X(\omega_3(P))$} & \multicolumn{1}{c}{$Y(\omega_3(P))$} & \multicolumn{1}{c}{$X(\omega_8(P))$} & \multicolumn{1}{c}{$Y(\omega_8(P))$} & \multicolumn{1}{c}{$X(\omega_{24}(P))$} & \multicolumn{1}{c}{$Y(\omega_{24}(P))$}\\ \hline
\endfirsthead

\multicolumn{9}{c}%
{{ \tablename\ \thetable{}: continued}} \\
\hline \multicolumn{1}{c}{$P$} & \multicolumn{1}{c}{$X(P)$} & \multicolumn{1}{c}{$Y(P)$} & \multicolumn{1}{c}{$X(\omega_3(P))$} & \multicolumn{1}{c}{$Y(\omega_3(P))$} & \multicolumn{1}{c}{$X(\omega_8(P))$} & \multicolumn{1}{c}{$Y(\omega_8(P))$} & \multicolumn{1}{c}{$X(\omega_{24}(P))$} & \multicolumn{1}{c}{$Y(\omega_{24}(P))$}\\ \hline
\endhead

\hline
\endfoot

\hline\hline
\endlastfoot

\texttt{1}&$(0,1,1)$&$(3,0,1)$&$(4,4,3)$&$(4,3,3)$&$(2,1,4)$&$(0,2,4)$&$(4,1,2)$&$(1,2,2)$\\
\texttt{2}&$(0,1,1)$&$(2,0,4)$&$(4,4,3)$&$(1,2,2)$&$(1,2,2)$&$(2,2,3)$&$(2,2,4)$&$(3,3,1)$\\
\texttt{3}&$(0,1,3)$&$(4,1,4)$&$(2,3,0)$&$(0,1,0)$&$(1,3,4)$&$(0,4,0)$&$(0,2,1)$&$(3,2,4)$\\
\texttt{4}&$(0,1,3)$&$(1,4,1)$&$(2,3,0)$&$(0,4,0)$&$(1,4,3)$&$(3,2,1)$&$(4,4,4)$&$(2,0,2)$\\
\texttt{5}&$(0,2,1)$&$(3,2,4)$&$(1,3,4)$&$(0,4,0)$&$(2,3,0)$&$(0,1,0)$&$(0,1,3)$&$(4,1,4)$\\
\texttt{6}&$(0,2,1)$&$(2,3,1)$&$(1,3,4)$&$(0,1,0)$&$(4,4,0)$&$(3,4,2)$&$(2,0,4)$&$(4,3,3)$\\
\texttt{7}&$(0,3,0)$&$(4,1,2)$&$(4,1,3)$&$(1,4,2)$&$(3,0,2)$&$(3,1,1)$&$(3,2,1)$&$(0,3,0)$\\
\texttt{8}&$(0,3,0)$&$(1,4,3)$&$(4,1,3)$&$(4,1,3)$&$(0,4,3)$&$(3,2,3)$&$(1,1,1)$&$(4,2,1)$\\
\texttt{9}&$(0,4,0)$&$(1,0,2)$&$(3,4,3)$&$(3,2,2)$&$(1,4,4)$&$(2,4,4)$&$(4,3,4)$&$(1,1,4)$\\
\texttt{10}&$(0,4,0)$&$(4,0,3)$&$(3,4,3)$&$(2,3,3)$&$(3,3,2)$&$(1,1,2)$&$(3,4,2)$&$(0,1,2)$\\
\texttt{11}&$(0,4,3)$&$(2,3,2)$&$(1,1,1)$&$(1,3,4)$&$(1,1,2)$&$(4,0,4)$&$(4,2,2)$&$(0,2,0)$\\
\texttt{12}&$(0,4,3)$&$(3,2,3)$&$(1,1,1)$&$(4,2,1)$&$(0,3,0)$&$(1,4,3)$&$(4,1,3)$&$(4,1,3)$\\
\texttt{13}&$(1,0,3)$&$(3,4,4)$&$(3,3,3)$&$(2,0,4)$&$(2,1,0)$&$(3,0,1)$&$(3,0,4)$&$(4,2,4)$\\
\texttt{14}&$(1,0,3)$&$(2,1,1)$&$(3,3,3)$&$(3,0,1)$&$(3,2,3)$&$(4,4,4)$&$(3,1,1)$&$(0,2,0)$\\
\texttt{15}&$(1,1,1)$&$(1,3,4)$&$(0,4,3)$&$(2,3,2)$&$(4,2,2)$&$(0,2,0)$&$(1,1,2)$&$(4,0,4)$\\
\texttt{16}&$(1,1,1)$&$(4,2,1)$&$(0,4,3)$&$(3,2,3)$&$(4,1,3)$&$(4,1,3)$&$(0,3,0)$&$(1,4,3)$\\
\texttt{17}&$(1,1,2)$&$(4,0,4)$&$(4,2,2)$&$(0,2,0)$&$(0,4,3)$&$(2,3,2)$&$(1,1,1)$&$(1,3,4)$\\
\texttt{18}&$(1,1,2)$&$(1,0,1)$&$(4,2,2)$&$(0,3,0)$&$(3,0,2)$&$(2,4,4)$&$(3,2,1)$&$(0,2,0)$\\
\texttt{19}&$(1,1,3)$&$(2,4,4)$&$(2,4,2)$&$(2,2,0)$&$(4,3,4)$&$(4,4,1)$&$(1,4,4)$&$(3,1,1)$\\
\texttt{20}&$(1,1,3)$&$(3,1,1)$&$(2,4,2)$&$(3,3,0)$&$(3,4,2)$&$(0,4,3)$&$(3,3,2)$&$(4,4,3)$\\
\texttt{21}&$(1,2,2)$&$(2,2,3)$&$(2,2,4)$&$(3,3,1)$&$(0,1,1)$&$(2,0,4)$&$(4,4,3)$&$(1,2,2)$\\
\texttt{22}&$(1,2,2)$&$(3,3,2)$&$(2,2,4)$&$(2,2,4)$&$(3,1,4)$&$(4,4,0)$&$(2,1,3)$&$(4,1,1)$\\
\texttt{23}&$(1,3,3)$&$(3,4,1)$&$(3,2,2)$&$(3,1,4)$&$(3,0,4)$&$(1,3,1)$&$(2,1,0)$&$(2,0,4)$\\
\texttt{24}&$(1,3,3)$&$(2,1,4)$&$(3,2,2)$&$(2,4,1)$&$(3,1,1)$&$(0,3,0)$&$(3,2,3)$&$(1,1,1)$\\
\texttt{25}&$(1,3,4)$&$(0,1,0)$&$(0,2,1)$&$(2,3,1)$&$(2,0,4)$&$(4,3,3)$&$(4,4,0)$&$(3,4,2)$\\
\texttt{26}&$(1,3,4)$&$(0,4,0)$&$(0,2,1)$&$(3,2,4)$&$(0,1,3)$&$(4,1,4)$&$(2,3,0)$&$(0,1,0)$\\
\texttt{27}&$(1,4,3)$&$(3,2,1)$&$(4,4,4)$&$(2,0,2)$&$(0,1,3)$&$(1,4,1)$&$(2,3,0)$&$(0,4,0)$\\
\texttt{28}&$(1,4,3)$&$(2,3,4)$&$(4,4,4)$&$(3,0,3)$&$(2,0,4)$&$(1,2,2)$&$(4,4,0)$&$(2,1,3)$\\
\texttt{29}&$(1,4,4)$&$(2,4,4)$&$(4,3,4)$&$(1,1,4)$&$(0,4,0)$&$(1,0,2)$&$(3,4,3)$&$(3,2,2)$\\
\texttt{30}&$(1,4,4)$&$(3,1,1)$&$(4,3,4)$&$(4,4,1)$&$(2,4,2)$&$(2,2,0)$&$(1,1,3)$&$(2,4,4)$\\
\texttt{31}&$(2,0,2)$&$(2,1,2)$&$(2,3,3)$&$(3,3,3)$&$(2,3,4)$&$(1,2,3)$&$(2,2,3)$&$(1,0,2)$\\
\texttt{32}&$(2,0,2)$&$(3,4,3)$&$(2,3,3)$&$(2,2,2)$&$(4,0,3)$&$(1,3,3)$&$(4,2,3)$&$(4,2,3)$\\
\texttt{33}&$(2,0,4)$&$(4,3,3)$&$(4,4,0)$&$(3,4,2)$&$(1,3,4)$&$(0,1,0)$&$(0,2,1)$&$(2,3,1)$\\
\texttt{34}&$(2,0,4)$&$(1,2,2)$&$(4,4,0)$&$(2,1,3)$&$(1,4,3)$&$(2,3,4)$&$(4,4,4)$&$(3,0,3)$\\
\texttt{35}&$(2,1,0)$&$(3,0,1)$&$(3,0,4)$&$(4,2,4)$&$(1,0,3)$&$(3,4,4)$&$(3,3,3)$&$(2,0,4)$\\
\texttt{36}&$(2,1,0)$&$(2,0,4)$&$(3,0,4)$&$(1,3,1)$&$(3,2,2)$&$(3,1,4)$&$(1,3,3)$&$(3,4,1)$\\
\texttt{37}&$(2,1,3)$&$(1,4,4)$&$(3,1,4)$&$(1,1,0)$&$(4,1,2)$&$(4,3,3)$&$(2,1,4)$&$(0,3,1)$\\
\texttt{38}&$(2,1,3)$&$(4,1,1)$&$(3,1,4)$&$(4,4,0)$&$(2,2,4)$&$(2,2,4)$&$(1,2,2)$&$(3,3,2)$\\
\texttt{39}&$(2,1,4)$&$(0,3,1)$&$(4,1,2)$&$(4,3,3)$&$(3,1,4)$&$(1,1,0)$&$(2,1,3)$&$(1,4,4)$\\
\texttt{40}&$(2,1,4)$&$(0,2,4)$&$(4,1,2)$&$(1,2,2)$&$(0,1,1)$&$(3,0,1)$&$(4,4,3)$&$(4,3,3)$\\
\texttt{41}&$(2,2,3)$&$(1,0,2)$&$(2,3,4)$&$(1,2,3)$&$(2,3,3)$&$(3,3,3)$&$(2,0,2)$&$(2,1,2)$\\
\texttt{42}&$(2,2,3)$&$(4,0,3)$&$(2,3,4)$&$(4,3,2)$&$(3,4,1)$&$(1,0,2)$&$(2,4,0)$&$(0,1,0)$\\
\texttt{43}&$(2,2,4)$&$(3,3,1)$&$(1,2,2)$&$(2,2,3)$&$(4,4,3)$&$(1,2,2)$&$(0,1,1)$&$(2,0,4)$\\
\texttt{44}&$(2,2,4)$&$(2,2,4)$&$(1,2,2)$&$(3,3,2)$&$(2,1,3)$&$(4,1,1)$&$(3,1,4)$&$(4,4,0)$\\
\texttt{45}&$(2,3,0)$&$(0,1,0)$&$(0,1,3)$&$(4,1,4)$&$(0,2,1)$&$(3,2,4)$&$(1,3,4)$&$(0,4,0)$\\
\texttt{46}&$(2,3,0)$&$(0,4,0)$&$(0,1,3)$&$(1,4,1)$&$(4,4,4)$&$(2,0,2)$&$(1,4,3)$&$(3,2,1)$\\
\texttt{47}&$(2,3,3)$&$(3,3,3)$&$(2,0,2)$&$(2,1,2)$&$(2,2,3)$&$(1,0,2)$&$(2,3,4)$&$(1,2,3)$\\
\texttt{48}&$(2,3,3)$&$(2,2,2)$&$(2,0,2)$&$(3,4,3)$&$(4,2,3)$&$(4,2,3)$&$(4,0,3)$&$(1,3,3)$\\
\texttt{49}&$(2,3,4)$&$(1,2,3)$&$(2,2,3)$&$(1,0,2)$&$(2,0,2)$&$(2,1,2)$&$(2,3,3)$&$(3,3,3)$\\
\texttt{50}&$(2,3,4)$&$(4,3,2)$&$(2,2,3)$&$(4,0,3)$&$(2,4,0)$&$(0,1,0)$&$(3,4,1)$&$(1,0,2)$\\
\texttt{51}&$(2,4,0)$&$(0,1,0)$&$(3,4,1)$&$(1,0,2)$&$(2,3,4)$&$(4,3,2)$&$(2,2,3)$&$(4,0,3)$\\
\texttt{52}&$(2,4,0)$&$(0,4,0)$&$(3,4,1)$&$(4,0,3)$&$(4,0,3)$&$(4,2,2)$&$(4,2,3)$&$(1,3,2)$\\
\texttt{53}&$(2,4,2)$&$(2,2,0)$&$(1,1,3)$&$(2,4,4)$&$(1,4,4)$&$(3,1,1)$&$(4,3,4)$&$(4,4,1)$\\
\texttt{54}&$(2,4,2)$&$(3,3,0)$&$(1,1,3)$&$(3,1,1)$&$(3,3,2)$&$(4,4,3)$&$(3,4,2)$&$(0,4,3)$\\
\texttt{55}&$(3,0,2)$&$(2,4,4)$&$(3,2,1)$&$(0,2,0)$&$(1,1,2)$&$(1,0,1)$&$(4,2,2)$&$(0,3,0)$\\
\texttt{56}&$(3,0,2)$&$(3,1,1)$&$(3,2,1)$&$(0,3,0)$&$(0,3,0)$&$(4,1,2)$&$(4,1,3)$&$(1,4,2)$\\
\texttt{57}&$(3,0,4)$&$(4,2,4)$&$(2,1,0)$&$(3,0,1)$&$(3,3,3)$&$(2,0,4)$&$(1,0,3)$&$(3,4,4)$\\
\texttt{58}&$(3,0,4)$&$(1,3,1)$&$(2,1,0)$&$(2,0,4)$&$(1,3,3)$&$(3,4,1)$&$(3,2,2)$&$(3,1,4)$\\
\texttt{59}&$(3,1,1)$&$(0,2,0)$&$(3,2,3)$&$(4,4,4)$&$(3,3,3)$&$(3,0,1)$&$(1,0,3)$&$(2,1,1)$\\
\texttt{60}&$(3,1,1)$&$(0,3,0)$&$(3,2,3)$&$(1,1,1)$&$(1,3,3)$&$(2,1,4)$&$(3,2,2)$&$(2,4,1)$\\
\texttt{61}&$(3,1,4)$&$(1,1,0)$&$(2,1,3)$&$(1,4,4)$&$(2,1,4)$&$(0,3,1)$&$(4,1,2)$&$(4,3,3)$\\
\texttt{62}&$(3,1,4)$&$(4,4,0)$&$(2,1,3)$&$(4,1,1)$&$(1,2,2)$&$(3,3,2)$&$(2,2,4)$&$(2,2,4)$\\
\texttt{63}&$(3,2,1)$&$(0,2,0)$&$(3,0,2)$&$(2,4,4)$&$(4,2,2)$&$(0,3,0)$&$(1,1,2)$&$(1,0,1)$\\
\texttt{64}&$(3,2,1)$&$(0,3,0)$&$(3,0,2)$&$(3,1,1)$&$(4,1,3)$&$(1,4,2)$&$(0,3,0)$&$(4,1,2)$\\
\texttt{65}&$(3,2,2)$&$(2,4,1)$&$(1,3,3)$&$(2,1,4)$&$(3,2,3)$&$(1,1,1)$&$(3,1,1)$&$(0,3,0)$\\
\texttt{66}&$(3,2,2)$&$(3,1,4)$&$(1,3,3)$&$(3,4,1)$&$(2,1,0)$&$(2,0,4)$&$(3,0,4)$&$(1,3,1)$\\
\texttt{67}&$(3,2,3)$&$(4,4,4)$&$(3,1,1)$&$(0,2,0)$&$(1,0,3)$&$(2,1,1)$&$(3,3,3)$&$(3,0,1)$\\
\texttt{68}&$(3,2,3)$&$(1,1,1)$&$(3,1,1)$&$(0,3,0)$&$(3,2,2)$&$(2,4,1)$&$(1,3,3)$&$(2,1,4)$\\
\texttt{69}&$(3,3,2)$&$(4,4,3)$&$(3,4,2)$&$(0,4,3)$&$(2,4,2)$&$(3,3,0)$&$(1,1,3)$&$(3,1,1)$\\
\texttt{70}&$(3,3,2)$&$(1,1,2)$&$(3,4,2)$&$(0,1,2)$&$(0,4,0)$&$(4,0,3)$&$(3,4,3)$&$(2,3,3)$\\
\texttt{71}&$(3,3,3)$&$(3,0,1)$&$(1,0,3)$&$(2,1,1)$&$(3,1,1)$&$(0,2,0)$&$(3,2,3)$&$(4,4,4)$\\
\texttt{72}&$(3,3,3)$&$(2,0,4)$&$(1,0,3)$&$(3,4,4)$&$(3,0,4)$&$(4,2,4)$&$(2,1,0)$&$(3,0,1)$\\
\texttt{73}&$(3,4,1)$&$(1,0,2)$&$(2,4,0)$&$(0,1,0)$&$(2,2,3)$&$(4,0,3)$&$(2,3,4)$&$(4,3,2)$\\
\texttt{74}&$(3,4,1)$&$(4,0,3)$&$(2,4,0)$&$(0,4,0)$&$(4,2,3)$&$(1,3,2)$&$(4,0,3)$&$(4,2,2)$\\
\texttt{75}&$(3,4,2)$&$(0,1,2)$&$(3,3,2)$&$(1,1,2)$&$(3,4,3)$&$(2,3,3)$&$(0,4,0)$&$(4,0,3)$\\
\texttt{76}&$(3,4,2)$&$(0,4,3)$&$(3,3,2)$&$(4,4,3)$&$(1,1,3)$&$(3,1,1)$&$(2,4,2)$&$(3,3,0)$\\
\texttt{77}&$(3,4,3)$&$(3,2,2)$&$(0,4,0)$&$(1,0,2)$&$(4,3,4)$&$(1,1,4)$&$(1,4,4)$&$(2,4,4)$\\
\texttt{78}&$(3,4,3)$&$(2,3,3)$&$(0,4,0)$&$(4,0,3)$&$(3,4,2)$&$(0,1,2)$&$(3,3,2)$&$(1,1,2)$\\
\texttt{79}&$(4,0,3)$&$(4,2,2)$&$(4,2,3)$&$(1,3,2)$&$(2,4,0)$&$(0,4,0)$&$(3,4,1)$&$(4,0,3)$\\
\texttt{80}&$(4,0,3)$&$(1,3,3)$&$(4,2,3)$&$(4,2,3)$&$(2,0,2)$&$(3,4,3)$&$(2,3,3)$&$(2,2,2)$\\
\texttt{81}&$(4,1,2)$&$(4,3,3)$&$(2,1,4)$&$(0,3,1)$&$(2,1,3)$&$(1,4,4)$&$(3,1,4)$&$(1,1,0)$\\
\texttt{82}&$(4,1,2)$&$(1,2,2)$&$(2,1,4)$&$(0,2,4)$&$(4,4,3)$&$(4,3,3)$&$(0,1,1)$&$(3,0,1)$\\
\texttt{83}&$(4,1,3)$&$(1,4,2)$&$(0,3,0)$&$(4,1,2)$&$(3,2,1)$&$(0,3,0)$&$(3,0,2)$&$(3,1,1)$\\
\texttt{84}&$(4,1,3)$&$(4,1,3)$&$(0,3,0)$&$(1,4,3)$&$(1,1,1)$&$(4,2,1)$&$(0,4,3)$&$(3,2,3)$\\
\texttt{85}&$(4,2,2)$&$(0,2,0)$&$(1,1,2)$&$(4,0,4)$&$(1,1,1)$&$(1,3,4)$&$(0,4,3)$&$(2,3,2)$\\
\texttt{86}&$(4,2,2)$&$(0,3,0)$&$(1,1,2)$&$(1,0,1)$&$(3,2,1)$&$(0,2,0)$&$(3,0,2)$&$(2,4,4)$\\
\texttt{87}&$(4,2,3)$&$(1,3,2)$&$(4,0,3)$&$(4,2,2)$&$(3,4,1)$&$(4,0,3)$&$(2,4,0)$&$(0,4,0)$\\
\texttt{88}&$(4,2,3)$&$(4,2,3)$&$(4,0,3)$&$(1,3,3)$&$(2,3,3)$&$(2,2,2)$&$(2,0,2)$&$(3,4,3)$\\
\texttt{89}&$(4,3,4)$&$(4,4,1)$&$(1,4,4)$&$(3,1,1)$&$(1,1,3)$&$(2,4,4)$&$(2,4,2)$&$(2,2,0)$\\
\texttt{90}&$(4,3,4)$&$(1,1,4)$&$(1,4,4)$&$(2,4,4)$&$(3,4,3)$&$(3,2,2)$&$(0,4,0)$&$(1,0,2)$\\
\texttt{91}&$(4,4,0)$&$(3,4,2)$&$(2,0,4)$&$(4,3,3)$&$(0,2,1)$&$(2,3,1)$&$(1,3,4)$&$(0,1,0)$\\
\texttt{92}&$(4,4,0)$&$(2,1,3)$&$(2,0,4)$&$(1,2,2)$&$(4,4,4)$&$(3,0,3)$&$(1,4,3)$&$(2,3,4)$\\
\texttt{93}&$(4,4,3)$&$(4,3,3)$&$(0,1,1)$&$(3,0,1)$&$(4,1,2)$&$(1,2,2)$&$(2,1,4)$&$(0,2,4)$\\
\texttt{94}&$(4,4,3)$&$(1,2,2)$&$(0,1,1)$&$(2,0,4)$&$(2,2,4)$&$(3,3,1)$&$(1,2,2)$&$(2,2,3)$\\
\texttt{95}&$(4,4,4)$&$(3,0,3)$&$(1,4,3)$&$(2,3,4)$&$(4,4,0)$&$(2,1,3)$&$(2,0,4)$&$(1,2,2)$\\
\texttt{96}&$(4,4,4)$&$(2,0,2)$&$(1,4,3)$&$(3,2,1)$&$(2,3,0)$&$(0,4,0)$&$(0,1,3)$&$(1,4,1)$\\

\end{longtable}

\end{center}

\begin{figure}
\caption{Atkin-Lehner involutions on $\widetilde{X}_0(24)(\mathbb{F}_{5^3})$}
\[\begin{tikzpicture}\label{graph24mod125}
	\vertex[fill] (1) at (0,0) [label=left:$\texttt{1}$] {};
	\vertex[fill] (93) at (2,0) [label=right:$\texttt{93}$] {};
    \vertex (40) at (0,-2) [label=left:$\texttt{40}$] {};
	\vertex (82) at (2,-2) [label=right:$\texttt{82}$] {};
    \vertex (2) at (4,0) [label=left:$\texttt{2}$] {};
	\vertex (94) at (6,0) [label=right:$\texttt{94}$] {};
    \vertex[fill] (21) at (4,-2) [label=left:$\texttt{21}$] {};
	\vertex (43) at (6,-2) [label=right:$\texttt{43}$] {};
    \vertex[fill] (3) at (8,0) [label=left:$\texttt{3}$] {};
	\vertex[fill] (45) at (10,0) [label=right:$\texttt{45}$] {};
    \vertex (26) at (8,-2) [label=left:$\texttt{26}$] {};
	\vertex (5) at (10,-2) [label=right:$\texttt{5}$] {};
    \vertex (4) at (12,0) [label=left:$\texttt{4}$] {};
	\vertex (46) at (14,0) [label=right:$\texttt{46}$] {};
    \vertex (27) at (12,-2) [label=left:$\texttt{27}$] {};
	\vertex[fill] (96) at (14,-2) [label=right:$\texttt{96}$] {};
    \vertex (6) at (0,-3) [label=left:$\texttt{6}$] {};
	\vertex (25) at (2,-3) [label=right:$\texttt{25}$] {};
    \vertex[fill] (91) at (0,-5) [label=left:$\texttt{91}$] {};
	\vertex[fill] (33) at (2,-5) [label=right:$\texttt{33}$] {};
    \vertex[fill] (7) at (4,-3) [label=left:$\texttt{7}$] {};
	\vertex (83) at (6,-3) [label=right:$\texttt{83}$] {};
    \vertex (56) at (4,-5) [label=left:$\texttt{56}$] {};
	\vertex (64) at (6,-5) [label=right:$\texttt{64}$] {};
    \vertex[fill] (8) at (8,-3) [label=left:$\texttt{8}$] {};
	\vertex (84) at (10,-3) [label=right:$\texttt{84}$] {};
    \vertex (12) at (8,-5) [label=left:$\texttt{12}$] {};
	\vertex (16) at (10,-5) [label=right:$\texttt{16}$] {};
    \vertex[fill] (9) at (12,-3) [label=left:$\texttt{9}$] {};
	\vertex[fill] (77) at (14,-3) [label=right:$\texttt{77}$] {};
    \vertex (29) at (12,-5) [label=left:$\texttt{29}$] {};
	\vertex (90) at (14,-5) [label=right:$\texttt{90}$] {};
    \vertex (10) at (0,-6) [label=left:$\texttt{10}$] {};
	\vertex (78) at (2,-6) [label=right:$\texttt{78}$] {};
    \vertex (70) at (0,-8) [label=left:$\texttt{70}$] {};
	\vertex[fill] (75) at (2,-8) [label=right:$\texttt{75}$] {};
    \vertex[fill] (11) at (4,-6) [label=left:$\texttt{11}$] {};
	\vertex[fill] (15) at (6,-6) [label=right:$\texttt{15}$] {};
    \vertex (17) at (4,-8) [label=left:$\texttt{17}$] {};
	\vertex (85) at (6,-8) [label=right:$\texttt{85}$] {};
    \vertex (13) at (8,-6) [label=left:$\texttt{13}$] {};
	\vertex (72) at (10,-6) [label=right:$\texttt{72}$] {};
    \vertex[fill] (35) at (8,-8) [label=left:$\texttt{35}$] {};
	\vertex[fill] (57) at (10,-8) [label=right:$\texttt{57}$] {};
    \vertex (14) at (12,-6) [label=left:$\texttt{14}$] {};
	\vertex (71) at (14,-6) [label=right:$\texttt{71}$] {};
    \vertex[fill] (67) at (12,-8) [label=left:$\texttt{67}$] {};
	\vertex[fill] (59) at (14,-8) [label=right:$\texttt{59}$] {};
    \vertex (18) at (0,-9) [label=left:$\texttt{18}$] {};
	\vertex (86) at (2,-9) [label=right:$\texttt{86}$] {};
    \vertex[fill] (55) at (0,-11) [label=left:$\texttt{55}$] {};
	\vertex[fill] (63) at (2,-11) [label=right:$\texttt{63}$] {};
    \vertex[fill] (19) at (4,-9) [label=left:$\texttt{19}$] {};
	\vertex[fill] (53) at (6,-9) [label=right:$\texttt{53}$] {};
    \vertex (89) at (4,-11) [label=left:$\texttt{89}$] {};
	\vertex (30) at (6,-11) [label=right:$\texttt{30}$] {};
    \vertex (20) at (8,-9) [label=left:$\texttt{20}$] {};
	\vertex (54) at (10,-9) [label=right:$\texttt{54}$] {};
    \vertex[fill] (76) at (8,-11) [label=left:$\texttt{76}$] {};
	\vertex (69) at (10,-11) [label=right:$\texttt{69}$] {};
    \vertex[fill] (22) at (12,-9) [label=left:$\texttt{22}$] {};
	\vertex (44) at (14,-9) [label=right:$\texttt{44}$] {};
    \vertex (62) at (12,-11) [label=left:$\texttt{62}$] {};
	\vertex (38) at (14,-11) [label=right:$\texttt{38}$] {};
    \vertex (23) at (0,-12) [label=left:$\texttt{23}$] {};
	\vertex[fill] (66) at (2,-12) [label=right:$\texttt{66}$] {};
    \vertex (58) at (0,-14) [label=left:$\texttt{58}$] {};
	\vertex (36) at (2,-14) [label=right:$\texttt{36}$] {};
    \vertex (24) at (4,-12) [label=left:$\texttt{24}$] {};
	\vertex[fill] (65) at (6,-12) [label=right:$\texttt{65}$] {};
    \vertex (60) at (4,-14) [label=left:$\texttt{60}$] {};
	\vertex (68) at (6,-14) [label=right:$\texttt{68}$] {};
    \vertex (28) at (8,-12) [label=left:$\texttt{28}$] {};
	\vertex[fill] (95) at (10,-12) [label=right:$\texttt{95}$] {};
    \vertex (34) at (8,-14) [label=left:$\texttt{34}$] {};
	\vertex (92) at (10,-14) [label=right:$\texttt{92}$] {};
    \vertex[fill] (31) at (12,-12) [label=left:$\texttt{31}$] {};
	\vertex[fill] (47) at (14,-12) [label=right:$\texttt{47}$] {};
    \vertex (49) at (12,-14) [label=left:$\texttt{49}$] {};
	\vertex (41) at (14,-14) [label=right:$\texttt{41}$] {};
    \vertex (32) at (0,-15) [label=left:$\texttt{32}$] {};
	\vertex (48) at (2,-15) [label=right:$\texttt{48}$] {};
    \vertex[fill] (80) at (0,-17) [label=left:$\texttt{80}$] {};
	\vertex (88) at (2,-17) [label=right:$\texttt{88}$] {};
    \vertex[fill] (37) at (4,-15) [label=left:$\texttt{37}$] {};
	\vertex[fill] (61) at (6,-15) [label=right:$\texttt{61}$] {};
    \vertex (81) at (4,-17) [label=left:$\texttt{81}$] {};
	\vertex (39) at (6,-17) [label=right:$\texttt{39}$] {};
    \vertex (42) at (8,-15) [label=left:$\texttt{42}$] {};
	\vertex (50) at (10,-15) [label=right:$\texttt{50}$] {};
    \vertex[fill] (73) at (8,-17) [label=left:$\texttt{73}$] {};
	\vertex[fill] (51) at (10,-17) [label=right:$\texttt{51}$] {};
    \vertex (52) at (12,-15) [label=left:$\texttt{52}$] {};
	\vertex (74) at (14,-15) [label=right:$\texttt{74}$] {};
    \vertex[fill] (79) at (12,-17) [label=left:$\texttt{79}$] {};
	\vertex (87) at (14,-17) [label=right:$\texttt{87}$] {};

\path
		(1) edge (93)    (1) edge (40)    (1) edge (82)
        (2) edge (94)    (2) edge (21)    (2) edge (43)
        (3) edge (45)    (3) edge (26)    (3) edge (5)
        (4) edge (46)    (4) edge (27)    (4) edge (96)
        (5) edge (26)    (5) edge (45)    (5) edge (3)
        (6) edge (25)    (6) edge (91)    (6) edge (33)
        (7) edge (83)    (7) edge (56)    (7) edge (64)
        (8) edge (84)    (8) edge (12)    (8) edge (16)
        (9) edge (77)    (9) edge (29)    (9) edge (90)
        (10) edge (78)    (10) edge (70)    (10) edge (75)
        (11) edge (15)    (11) edge (17)    (11) edge (85)
        (12) edge (16)    (12) edge (8)    (12) edge (84)
        (13) edge (72)    (13) edge (35)    (13) edge (57)
        (14) edge (71)    (14) edge (67)    (14) edge (59)
        (15) edge (11)    (15) edge (85)    (15) edge (17)
        (16) edge (12)    (16) edge (84)    (16) edge (8)
        (17) edge (85)    (17) edge (11)    (17) edge (15)
        (18) edge (86)    (18) edge (55)    (18) edge (63)
        (19) edge (53)    (19) edge (89)    (19) edge (30)
        (20) edge (54)    (20) edge (76)    (20) edge (69)
        (21) edge (43)    (21) edge (2)    (21) edge (94)
        (22) edge (44)    (22) edge (62)    (22) edge (38)
        (23) edge (66)    (23) edge (58)    (23) edge (36)
        (24) edge (65)    (24) edge (60)    (24) edge (68)
        (25) edge (6)    (25) edge (33)    (25) edge (91)
        (26) edge (5)    (26) edge (3)    (26) edge (45)
        (27) edge (96)    (27) edge (4)    (27) edge (46)
        (28) edge (95)    (28) edge (34)    (28) edge (92)
        (29) edge (90)    (29) edge (9)    (29) edge (77)
        (30) edge (89)    (30) edge (53)    (30) edge (19)
        (31) edge (47)    (31) edge (49)    (31) edge (41)
        (32) edge (48)    (32) edge (80)    (32) edge (88)
        (33) edge (91)    (33) edge (25)    (33) edge (6)
        (34) edge (92)    (34) edge (28)    (34) edge (95)
        (35) edge (57)    (35) edge (13)    (35) edge (72)
        (36) edge (58)    (36) edge (66)    (36) edge (23)
        (37) edge (61)    (37) edge (81)    (37) edge (39)
        (38) edge (62)    (38) edge (44)    (38) edge (22)
        (39) edge (81)    (39) edge (61)    (39) edge (37)
        (40) edge (82)    (40) edge (1)    (40) edge (93)
        (41) edge (49)    (41) edge (47)    (41) edge (31)
        (42) edge (50)    (42) edge (73)    (42) edge (51)
        (43) edge (21)    (43) edge (94)    (43) edge (2)
        (44) edge (22)    (44) edge (38)    (44) edge (62)
        (45) edge (3)    (45) edge (5)    (45) edge (26)
        (46) edge (4)    (46) edge (96)    (46) edge (27)
        (47) edge (31)    (47) edge (41)    (47) edge (49)
        (48) edge (32)    (48) edge (88)    (48) edge (80)
        (49) edge (41)    (49) edge (31)    (49) edge (47)
        (50) edge (42)    (50) edge (51)    (50) edge (73)
        (51) edge (73)    (51) edge (50)    (51) edge (42)
        (52) edge (74)    (52) edge (79)    (52) edge (87)
        (53) edge (19)    (53) edge (30)    (53) edge (89)
        (54) edge (20)    (54) edge (69)    (54) edge (76)
        (55) edge (63)    (55) edge (18)    (55) edge (86)
        (56) edge (64)    (56) edge (7)    (56) edge (83)
        (57) edge (35)    (57) edge (72)    (57) edge (13)
        (58) edge (36)    (58) edge (23)    (58) edge (66)
        (59) edge (67)    (59) edge (71)    (59) edge (14)
        (60) edge (68)    (60) edge (24)    (60) edge (65)
        (61) edge (37)    (61) edge (39)    (61) edge (81)
        (62) edge (38)    (62) edge (22)    (62) edge (44)
        (63) edge (55)    (63) edge (86)    (63) edge (18)
        (64) edge (56)    (64) edge (83)    (64) edge (7)
        (65) edge (24)    (65) edge (68)    (65) edge (60)
        (66) edge (23)    (66) edge (36)    (66) edge (58)
        (67) edge (59)    (67) edge (14)    (67) edge (71)
        (68) edge (60)    (68) edge (65)    (68) edge (24)
        (69) edge (76)    (69) edge (54)    (69) edge (20)
        (70) edge (75)    (70) edge (10)    (70) edge (78)
        (71) edge (14)    (71) edge (59)    (71) edge (67)
        (72) edge (13)    (72) edge (57)    (72) edge (35)
        (73) edge (51)    (73) edge (42)    (73) edge (50)
        (74) edge (52)    (74) edge (87)    (74) edge (79)
        (75) edge (70)    (75) edge (78)    (75) edge (10)
        (76) edge (69)    (76) edge (20)    (76) edge (54)
        (77) edge (9)    (77) edge (90)    (77) edge (29)
        (78) edge (10)    (78) edge (75)    (78) edge (70)
        (79) edge (87)    (79) edge (52)    (79) edge (74)
        (80) edge (88)    (80) edge (32)    (80) edge (48)
        (81) edge (39)    (81) edge (37)    (81) edge (61)
        (82) edge (40)    (82) edge (93)    (82) edge (1)
        (83) edge (7)    (83) edge (64)    (83) edge (56)
        (84) edge (8)    (84) edge (16)    (84) edge (12)
        (85) edge (17)    (85) edge (15)    (85) edge (11)
        (86) edge (18)    (86) edge (63)    (86) edge (55)
        (87) edge (79)    (87) edge (74)    (87) edge (52)
        (88) edge (80)    (88) edge (48)    (88) edge (32)
        (89) edge (30)    (89) edge (19)    (89) edge (53)
        (90) edge (29)    (90) edge (77)    (90) edge (9)
        (91) edge (33)    (91) edge (6)    (91) edge (25)
        (92) edge (34)    (92) edge (95)    (92) edge (28)
        (93) edge (1)    (93) edge (82)    (93) edge (40)
        (94) edge (2)    (94) edge (43)    (94) edge (21)
        (95) edge (28)    (95) edge (92)    (95) edge (34)
        (96) edge (27)    (96) edge (46)    (96) edge (4)
	;
\end{tikzpicture}\]
\end{figure}
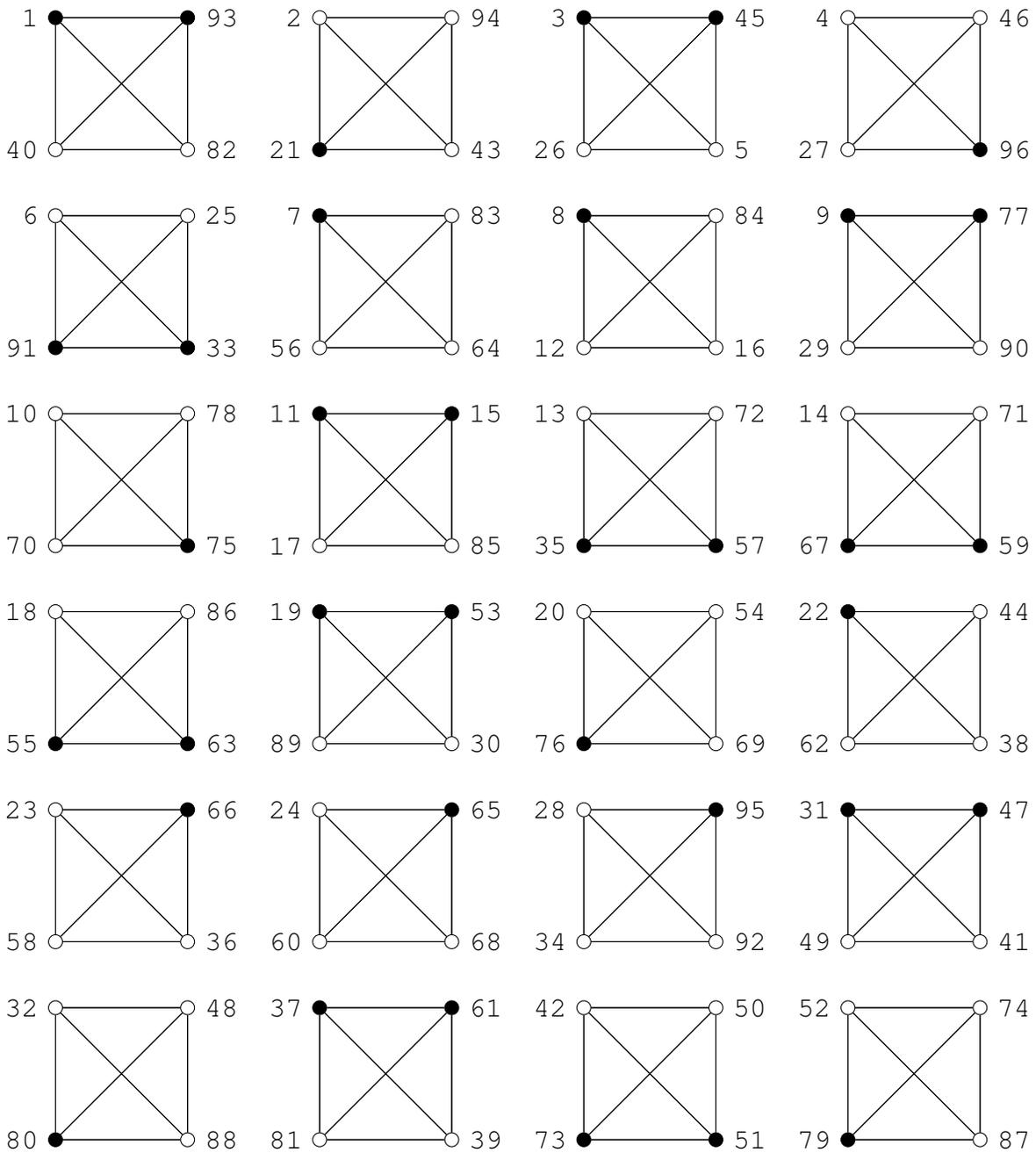

The information in Table \ref{table24-125involution} can be represented in Figure \ref{graph24mod125}. The $96$-vertex graph has $24$ connected components, which are all complete graphs of order 4. The black dot points are those such that $$\varphi(P)\in\{(-3,0),(-4,2),(0,6)\}$$ while the white dot points are those such that $$\varphi(P)\not\in\{(-3,0),(-4,2),(0,6)\}$$
The horizontal lines represent $\omega_3$. The vertical lines represent $\omega_8$. And the diagonal lines represent $\omega_{24}$. It is clear that each white dot point is connected to  at least one black dot point, which is not allowed by Lemma \ref{Frobenius}.

\end{proof}

\section*{}
\subsection*{Acknowledgements}
I appreciate Sheldon Kamienny for providing many valuable ideas and insightful comments.


\begin{thebibliography}{HD}






\normalsize
\baselineskip=17pt







\bibitem{BoschLutkebohmertRaynaud} Bosch, S.; L\"{u}tkebohmert, W.; Raynaud, M.: \emph{N\'{e}ron models.} Ergebnisse der Mathematik und ihrer Grenzgebiete (3), 21. Springer-Verlag, Berlin (1990)








\bibitem{DiamondShurman} Diamond, F., Shurman, J.: \emph{A first course in modular forms.} Graduate Texts in Mathematics, 228. Springer-Verlag, New York (2005)






\bibitem{Fricke} Fricke, R.: \emph{Die elliptischen Funktionen und ihre Anwendungen. Erster Teil. Die funktionentheoretischen und analytischen Grundlagen. (German)}\copyright 2012. Reprint of the 1916 original. Springer, Heidelberg (2011)

\bibitem{FurumotoHasegawa} Furumoto, M.; Hasegawa, Y.: \emph{Hyperelliptic quotients of modular curves $X_0(N)$.}
Tokyo J. Math. 22, no. 1, 105-125 (1999)







\bibitem{Igusa} Igusa, J.: \emph{Kroneckerian model of fields of elliptic modular functions.} Amer. J. Math.  81,  561-577 (1959)





\bibitem{JeonKim} Jeon, D.; Kim, C. H.: \emph{Bielliptic modular curves $X_1(N)$.} Acta Arith. 112, no. 1, 75-86 (2004)

\bibitem{JeonKim07} Jeon, D.; Kim, C. H.: \emph{On the arithmetic of certain modular curves.} Acta Arith. 130, no. 2, 181-193 (2007)

\bibitem{JeonKimSchweizer} Jeon, D., Kim, C. H., Schweizer, A.: \emph{On the torsion of elliptic curves over cubic number fields.} Acta Arith.  113,  no. 3, 291-301 (2004)


\bibitem{Jeong} Jeong, E. K.: \emph{Isomorphism classes of elliptic curves over finite feilds with characteristic 3.} J. Chungcheong Math. Soc. 22, no. 3, 299-307 (2009)










\bibitem{Kenku83} Kenku, M. A. \emph{Rational torsion points on elliptic curves defined over quadratic fields.} J. Nigerian Math. Soc.  2  (1983), 1-16.




\bibitem{KimKoo} Kim, C. H.; Koo, J. K.: \emph{The normalizer of $\Gamma_1(N)$ in $PSL_2(\mathbb{R})$.} Comm. Algebra 28, no. 11, 5303-5310 (2000)

































\bibitem{Ogg73} Ogg, A. P.: \emph{Rational points on certain elliptic modular curves.} Analytic number theory (Proc. Sympos. Pure Math., Vol XXIV, St. Louis Univ., St. Louis, Mo., 1972), pp. 221-231. Amer. Math. Soc., Providence, R.I. (1973)

\bibitem{Ogg74} Ogg, A. P.: \emph{Hyperelliptic modular curves.} Bull. Soc. Math. France 102, 449-462 (1974)

\bibitem{Ogg75} Ogg, A. P.: \emph{Diophantine equations and modular forms.} Bull. Amer. Math. Soc. 81, 14-27 (1975)








\bibitem{Sage} Sage: http://www.sagemath.org/






\bibitem{Silverman} Silverman, J. H.: \emph{The arithmetic of elliptic curves.} Second edition. Graduate Texts in Mathematics, 106. Springer, Dordrecht (2009)








\bibitem{Wang} Wang, J.: \emph{On the small cyclic torsion of elliptic curves over cubic number fields.} http://arxiv.org/abs/1703.07614


\bibitem{Waterhouse} Waterhouse, W. C.: \emph{Abelian varieties over finite fields.} Ann. Sci. \'{E}cole Norm. Sup. (4)  2  521-560 (1969)



\end{thebibliography}
\end{document}